\def\E{\mathbb{E}}
\def\P{\mathbb{P}}
\def\PP{\mathbb{P}}
\def\P{\mathbf{P}}
\def\1{\mathbf{1}}
\def\0{\mathbf{0}}
\newtheorem{Thm}{Theorem}[section]
\newtheorem{Cor}[Thm]{Corollary} 
\newtheorem{Lemma}[Thm]{Lemma}
\newtheorem{Prop}[Thm]{Proposition} 
\newtheorem{Def}[Thm]{Definition}
\newtheorem{As}[Thm]{Assumption}
\newtheorem{rems}[Thm]{Remark}
\def\BEN{\begin{enumerate}}  \def\BI{\begin{itemize}}
\def\EEN{\end{enumerate}}   \def\EI{\end{itemize}}
\def\mbb{\mathbb} \def\mbf{\mathbf} 
\def\mc{\mathcal} \def\unl{\underline} \def\ovl{\overline}
\def\le{\left}
\def\ri{\right}
\def\te#1{\mathrm{e}^{#1}}   
\def\WT{\widetilde}
\def\WH{\widehat}
\def\t{\tau}     
\def\w{\omega} \def\q{\qquad} 
  \def\td{\text{\rm d}}
\numberwithin{equation}{section}
\begin{document}

\title[Randomisation and recursion methods for mixed-exponential L\'{e}vy models]%
{Randomisation and recursion methods for \\mixed-exponential L\'{e}vy models, 
with financial applications}  
 
\address{Department of Mathematics, Imperial College London}
\author{Aleksandar Mijatovi\'{c}, Martijn Pistorius, Johannes Stolte}
\email{\{a.mijatovic, m.pistorius, j.stolte09\}@imperial.ac.uk}
\date{\today\newline
 {{{\it Acknowledgements.}
We thank the Editor and an anonymous referee, and Dan Crisan, Lane Hughston, Antoine Jacquier, Felicity Pearce, Vladimir Piterbarg, Johannes Ruf, David Taylor and Josef Teichmann, the participants of the Global Derivatives Trading \& Risk Management - Barcelona (2012), the Seventh World Congress of the Bachelier Finance Society - Sydney (2012), the Finance and Stochastics Seminar at Imperial College London (2011), and a satellite workshop at the University of the Witwatersrand - Johannesburg (2011), for useful comments. 
JS was supported by an EPSRC DTA grant and
a doctoral grant (Ref. Nr. D/11/42213) 
from the German Academic Exchange Service (DAAD)}}}

\begin{abstract}
We develop a new Monte Carlo variance reduction 
method to estimate the expectation of 
two commonly encountered path-dependent functionals: 
first-passage times and occupation times of sets. 
The method is based on a recursive approximation of the first-passage 
time probability and expected occupation time 
of sets of a L\'{e}vy bridge process that 
relies in part on a randomisation of the time parameter.
We establish this recursion for general L\'{e}vy processes 
and derive its explicit form for mixed-exponential jump-diffusions,
a dense subclass (in the sense of weak approximation) of L\'evy processes, 
which includes Brownian motion with drift, 
Kou's double-exponential model and 
hyper-exponential jump-diffusion models. We present a highly 
accurate numerical realisation and derive error estimates. 
By way of illustration the method is applied to the valuation of range accruals and barrier options 
under exponential L\'{e}vy models and Bates-type stochastic volatility models with exponential jumps. 
Compared with standard Monte Carlo methods, we find that the method 
is significantly more efficient.

\noindent{\sc Keywords:} L\'{e}vy bridge process, stochastic volatility model with jumps, first-passage time, occupation time, mixed-exponential jump-diffusion, Markov bridge sampling, continuous Euler-Maruyama scheme.

\noindent{\sc MSC 2010:} 65C05, 91G60.
\end{abstract}

\maketitle

\section{Introduction} 
{\bf Motivation and brief outline.} The Markov bridge sampling method for 
the estimation of the expectation 
 $\E \left[ F(T,\xi) \right]$ of a given path-functional $F$ of 
 a Markov process $\xi$ and the horizon $T>0$ 
 consists of averaging conditional 
 expectations $\WT F(\xi_{t_0}, \ldots, \xi_{t_N})$ 
 over $M$ independent copies
 $(\xi^{(i)}_{t_0}, \ldots, \xi^{(i)}_{t_N})$, $i=1, \ldots, M$, of 
 the values $(\xi_{t_0}, \ldots, \xi_{t_N})$ that
 $\xi$ takes on the grid 
 $\mbb T_N=\{0=t_0 < t_1 < \ldots < t_N = T\}$:
 \begin{equation}\label{eq:EfX}
 \E \left[ F(T,\xi) \right] \approx \frac{1}{M}\sum_{i=1}^M \WT F(\xi^{(i)}_{t_0}, \ldots, \xi^{(i)}_{t_N}), 
\end{equation}
where $\WT F(\xi_{t_0}, \ldots, \xi_{t_N})$ denotes the regular version of the conditional expectation 
$\E\le[F(T,\xi) | \xi_{t_0}, \ldots, \xi_{t_N}\ri]$. The name of the method 
derives from the fact that, conditional on the values  $(\xi_{t_0}, \ldots, \xi_{t_N})$, the stochastic processes $\{\xi_t, t\in[t_i, t_{i+1}]\}$, for $i=0, \ldots, N-1$, 
are equal in law to Markov bridge processes. 
The estimator in \eqref{eq:EfX} is unbiased and 
has strictly smaller variance than the standard Monte Carlo estimator, 
as a consequence of the tower property of conditional 
expectation and the conditional 
variance formula. The Markov bridge sampling method has the advantage that it allows for refinements of the generated path to the required level of accuracy, and can be combined with importance sampling. Such a bridge method is especially suited for the evaluation of expectations of 
path-dependent functionals 
(see \cite{Boyle1997}, for example). 
Since the function $\WT F$ is in general not available in closed 
or analytically tractable form, the viability of the Markov bridge method hinges on the ability to efficiently approximate the function~$\WT F.$ 
In this paper we derive an efficient approximation method for 
the conditional expectations $\WT F$ of certain path-dependent functionals 
given in terms of occupation times of sets and first-passage 
times, which is achieved by approximating the law of the bridge process by the law of the process pinned down at an independent random time with small variance. 
Since the latter law is analytically tractable when $\xi$ is a mixed-exponential L\'{e}vy process, this allows us to develop a Markov bridge Monte Carlo method for estimation of the corresponding expectation $\E[F(T,\xi)]$. 
To demonstrate the potential of the simulation method
we extend the approach to a two-dimensional Markovian setting, and deploy the method to numerically approximate 
the values of two common path-dependent derivatives, barrier options and range accruals, under a version of 
the Bates model~\cite{Bates1996}, which is an example of a 
stochastic volatility model with jumps that is widely used in financial modelling---we refer to \cite{Gatheral2006,Cont2004} for background. 

{\bf Literature overview.} 
In the literature~\cite{FT,Metwally2002,RufScherer}
a number of bridge sampling methods exist 
dealing with cases in which $\xi$ is a one-dimensional L\'{e}vy process. 
In \cite{FT} an adaptive 
bridge sampling method is developed for real-valued L\'{e}vy processes 
based on short-time asymptotics of stopped L\'{e}vy processes. 
By conditioning on the jump-skeleton and exploiting the explicit form of the distribution of the maximum of a Brownian bridge, 
a simulation method for pricing of barrier options under jump-diffusions  is presented in~\cite{Metwally2002}, and a refinement of this algorithm and application to the pricing of corporate bonds is 
given in~\cite{RufScherer}. An exact simulation algorithm for generation of diffusion sample paths 
deploying Brownian bridges is designed and analysed  in~\cite{Beskos2006}.

Several alternative methods have been developed for approximation
of path-dependent functionals, often based on weak or strong (pathwise) approximations of the solution of the SDE. In the setting of diffusions, a classical treatment of various strong and weak approximation schemes is given in~\cite{KlPl}. More recently, the problem of approximation of general 
path-dependent functionals has also received attention in 
the case of L\'{e}vy-driven SDEs. In~\cite{Dereich}  
a multi-level Monte Carlo algorithm is developed for path-dependent functionals of L\'{e}vy driven SDEs that are Lipschitz continuous in the supremum norm, and identifies error bounds. This algorithm is based on an approximation of the driving L\'{e}vy process by a L\'{e}vy jump-diffusion constructed by replacing the small jumps by a Brownian motion, as was investigated in~\cite{AR}. Adopting an alternative approach that does not rely on the Brownian small-jump approximation, a multi-level extension is presented in \cite{FKSS}  
of the Monte Carlo method developed in~\cite{Kuznetsov2010} for estimation of 
Lipschitz 
functions of the final value and running maximum 
of a real-valued L\'{e}vy process. Some functionals that are of interest in various applications
are not included in the analysis of \cite{Dereich,FKSS}, as these fail to satisfy  the Lipschitz condition.
The bridge method that we present in the current paper 
provides approximations in two such cases, namely, the distribution of the running maximum 
and the expected occupation time of sets.

 {\bf Approximation of bridge functionals.}  
As mentioned above, a key-step in the development of the Markov bridge method is the availability of an efficient approximation of the 
conditional expectations $\WT F$. As in general the transition probabilities of the Markov processes considered here are not explicitly available, the first step is to approximate the Markov process in question by its continuous-time Euler-Maruyama (EM) scheme. The approximation of expectations of path-dependent
functionals under stochastic volatility models with jumps
using the continuous-time EM-scheme is based on the 
{\em harness property} of a Markov process
which states that, for any two epochs $t_1$ and $t_2$ the collections of values of the Markov process 
at times in between $t_1$ and $t_2$ is independent of 
the values for $t$ outside this interval, 
conditional on the values of the process at $t_1$ and $t_2$. 
Noting that a L\'{e}vy process that is conditioned to start 
from position $x$ and to take the value $y$ at the horizon $T$
is equal in law to a L\'{e}vy bridge process from $(0,x)$ to $(T,y)$, we are led to 
the problem of evaluating the expectations of path-dependent functionals of L\'{e}vy bridges.

{\bf Randomisation method and recursions.} The approximation method of the L\'{e}vy 
bridge quantities that we present is based in part 
on a randomisation of the time-parameter. This randomisation method was originally developed in \cite{Carr1998} for the 
valuation of American put options, and is known as Erlangisation in risk theory 
\cite[Ch. IX.8]{AA}. The method has been deployed in~\cite{AAU} for the efficient computation of ruin probabilities 
and in~\cite{ACU,BLMF,KvS,Kuznetsov2010,KP,Levconv} for the valuation of American-type and barrier options. 
This randomisation method is based 
on the fact that, according to the law of large numbers, the average of 
independent exponential random variables with mean $t$ 
converges to $t$. An average of $n$ such exponential random variables 
is equal in distribution to a Gamma$(n,n/t)$ random variable 
$\Gamma_{n,n/t}$, which has mean 
$t$ and variance $t^2/n$. 
As observed in~\cite[Ch. VII.6]{Feller}, 
the approximation of the value $f(t)$ of a continuous bounded function $f$ at $t>0$ by the expectation $\E[f({\Gamma_{n,n/t}})]$ of
$f$ evaluated at the random time $\Gamma_{n,n/t}$ 
is asymptotically exact: since $\Gamma_{n,n/t}$ converges to a point mass at $t$, it follows that the expectation $\E[f({\Gamma_{n,n/t}})]$ converges to $f(t)$ 
as $n$ tends to infinity. 
As regards the rate of convergence,
the form of the PDF of $\Gamma_{n,n/t}$ implies that, in the case 
that $f$ is $C^2$ at $t$, 
the decay of the error $\E[f({\Gamma_{n,n/t}})] - f(t)$ is linear in $1/n$, in line with \cite[Theorem 6]{AAU}, and that, moreover, $\E[f(\Gamma_{n,n/t})]$ 
admits the following expansion 
if the function $f$ is $C^{2k}$ at $t$:
\begin{equation*}
\E[f(\Gamma_{n,n/t})] - f(t) 
= \sum_{m=1}^{k} b_m(t)\le(\frac{1}{n}\ri)^m + o(n^{-k})\q\text{as $n\to\infty$},
\end{equation*}  
\noindent for certain functions $b_1, \ldots, b_k$ (given in Theorem~\ref{lem:conv} 
below). We apply this expansion to functions $f(t)$ that are equal to the expectations of 
path-dependent functionals of L\'{e}vy bridges living on the time-interval $[0,t]$. We note that $\E[f({\Gamma_{n,n/t}})]$ is equal to the 
expectation of the corresponding path-functional of the L\'{e}vy process $X$ pinned down 
at an independent random time that is equal in distribution to $\Gamma_{n,n/t}$.
For the path-dependent functionals that we consider (namely, first-passage times and occupation times of sets) 
the corresponding functions $f$ are sufficiently smooth, so that the use of the Richardson 
extrapolation is fully justified. It holds furthermore (see Theorem~\ref{thm:rec}) that 
the density functions $D_n(x,y)$ and $\Omega_n(x,y)$, $n\in\mbb N,$ given by
$D_{n,q}(x,y)\td y = \PP(\ovl X_{\Gamma_{n,q}}\leq x, X_{\Gamma_{n,q}}\in\td y)$ 
and  $\Omega_{n,q}(x,y)\td x\,\td y = \E\le[\int_0^{\Gamma_{n,q}}I_{\{X_u \in\td x , X_{\Gamma_{n,q}} \in\td y\}}\,
\td u\,\ri]$
corresponding to a random horizon $\Gamma_{n,n/t}$ 
satisfy the following recursions for $x,y\in\mbb R$ and $n\in\mbb N$: 
\begin{eqnarray}\label{eq:Drec}
&&  D_{n+1,q}(x,y) = \int_{-\infty}^x D_{n,q}(x-w,y-w) D_{1,q}(x,w) \td w,\q \max\{y,0\}\leq x,\\
\label{eq:Orec}
&& \Omega_{n+1,q}(x,y) = \int_{-\infty}^\infty\le[\Omega_{1,q}(x,w) u_{n,q}(y-w) 
+ \Omega_{n,q}(x-w,y-w)u_{1,q}(w)\ri]\td w,
\end{eqnarray}
where $u_{n,q}$ is the probability density function of the random variable $X_{\Gamma_{n,q}}$. 
For the dense class of mixed-exponential L\'{e}vy processes 
(see Definition~\ref{def:mixep} below) we present explicit 
solutions to these recursions. By way of numerical illustration the method 
was implemented for a number of models in this class, 
and the numerical outcomes are reported in 
Section~\ref{sec:NumericalResults}, confirming the theoretically predicted rates of decay of the error. 
We observed that the Richardson extrapolation based on a small number %
(about ten) recursive steps already yields 
highly accurate approximations. 

{\bf Markov bridge method.} We combine subsequently 
these approximations with a continuous-time EM scheme
to estimate the conditional expectations $\WT F$ 
corresponding to the first-passage times and 
occupation times of sets of a stochastic volatility 
process with jumps. To illustrate the effectiveness of the method 
we evaluated a barrier option and a range note 
under a Bates-type model using the proposed 
Markov bridge Monte Carlo scheme, and report the results in Section~\ref{sec:option}.
The rates of decay of the error that we find numerically in the case of barrier options 
are in line with the corresponding error estimates 
that were established in~\cite{Gobet} for the case of killed diffusion processes.

{\bf Contents.} 
The remainder of this paper is organized as follows. 
In Section~\ref{sec:DefinitionOfProcessAndProperties} explicit expressions
are derived for the first-passage probabilities and expected occupation 
times of a mixed-exponential L\'{e}vy process. 
Section~\ref{sec:conv} is devoted to error estimates and numerical illustrations are presented in Section~\ref{sec:NumericalResults}. Section~\ref{sec:option} contains a Markov bridge sampling method based on the randomisation method and numerical illustrations. The proof of the recursions~\eqref{eq:Drec} and \eqref{eq:Orec} is deferred to 
Appendix~\ref{sec:BridgeSampling}.

\section{Maximum and occupation time of mixed-exponential L\'{e}vy models} 
\label{sec:DefinitionOfProcessAndProperties}
We show in this section that the recursions in \eqref{eq:Drec} and \eqref{eq:Orec} admit explicit solutions 
in the case that the L\'{e}vy process $X$ is a mixed-exponential jump-diffusion, the definition of which we recall next.

\begin{Def}\label{def:mixep}\rm 
(i) A random variable has a {\em mixed-exponential density} if it has PDF $f$ given by
\begin{eqnarray}\label{eq:mixeden}
&& f(x)=\sum_{i=1}^{m^+}p_i^+\alpha_i^+\te{-\alpha_i^+ x}I_{(0,\infty)}(x) +
\sum_{j=1}^{m^-}p_j^-\alpha_j^-\te{-\alpha_j^- |x|}I_{(-\infty,0)}(x), \ \text{where}\ \\
&& \sum_{k=1}^{m^\pm}p_k^\pm=q^\pm, \quad
q^++q^-=1 \quad \text{and} \quad
-\alpha_{m^-}^-<\cdots<- \alpha_1^-<0<\alpha_1^+<\cdots< \alpha_{m^+}^+.\nonumber
\end{eqnarray}

(ii) A L\'{e}vy process $X=\{X_t, t\in\mbb R_+\}$ is a {\em mixed-exponential jump-diffusion} (MEJD)
if it is of the form
\begin{eqnarray}
\label{eq:Process}
X_t & = & \mu t+ \sigma W_t +\sum_{i=1}^{N_t}U_i, 
\end{eqnarray}
where $\mu$ is a real number and $\sigma$ is strictly positive, $ W $ is a standard Brownian motion, 
$N$ is a Poisson process with intensity $\lambda$, and the jump-sizes
$\{U_i, i\in\mbb N\}$
are IID with mixed-exponential density.
Here, the collections $W=\{W_t, t\in\mbb R_+\}$, 
$N=\{N_t, t\in\mbb R_+\}$ 
and $\{U_i, i\in\mbb N\}$ 
are independent.
\end{Def}
\begin{rems}\rm
(i) Including in Def.~\ref{def:mixep} the additional restriction that the weights $p_k^\pm$ are nonnegative, the 
L\'{e}vy process is a hyper-exponential jump-diffusion (HEJD). 
While HEJD processes are dense in the class of all L\'{e}vy processes with a completely monotone L\'{e}vy density, the collection of mixed-exponential jump-diffusions is dense in the class of all L\'{e}vy processes, in the sense of weak convergence of probability measures
(see \cite{Botta1986}).

(ii) The parameters $\{p_k^\pm, k=1, \ldots, m^\pm\}$ 
cannot be chosen arbitrarily but need to satisfy a restriction 
to guarantee that $f$ is a PDF. 
Necessary and sufficient conditions for $f$ to be a PDF are 
$$ p_1^\pm > 0, \qquad \sum_{k=1}^{m^\pm}p_k^\pm \alpha_k^\pm \geq 0, 
\q\text{and}\q \forall l = 1,...,m^\pm: \q\sum_{k=1}^{l}p_k^\pm \alpha_k^\pm \geq 0, 
$$ 
respectively. 
For a proof of these results and alternative conditions 
see \cite{Bartholomew1969}. 
In Section~\ref{sec:option} we will impose  the additional condition $ \alpha_1^+ > 1$, which ensures that the expectation $\E[S_t]$ of the exponential L\'{e}vy process $S_t = \exp\{X_t\}$ is finite for any non-negative $t$.

(iii) Samples can be drawn from the mixed-exponential distribution by using the acceptance-rejection method (see\cite{Press2002}) and taking as the instrumental distribution a double-exponential distribution. The double-exponential density multiplied by a constant will dominate the original mixed-exponential density. In the next section this method was used to obtain the Monte Carlo results. 

(iv) Since $\sigma$ is strictly positive, Assumption~\ref{as:smooth} is satisfied for the MEJD process $X$, and $X_{\Gamma_{n,q}}$, $n\in\mbb N, q>0$, has a density 
by Lemma~\ref{lem:density}.  
\end{rems}

From the definition of the MEJD process $X$ it is straightforward to verify that the characteristic exponent 
$\Psi(s)=-\log\E[\te{\mbf isX_1}]$ 
is a rational function of the form
$$
\Psi(s) = - \mbf i\mu s + \frac{\sigma^2 s^2}{2} - \lambda \left( \sum_{i=1}^{m^+}p_i^+\frac{\alpha_i^+}{\alpha_i^+-\mbf i s}  + \sum_{j=1}^{m^-}p_j^-\frac{\alpha_j^-}{\alpha_j^-+\mbf i s}  - 1 \right), \quad\quad s\in\mbb R.
$$
The distributions of $X$, the running supremum $\ovl X$ and the running infimum $\unl X$ 
at the random time $\Gamma_{1,q}$ and also the functions $D_{1,q}$ and $\Omega_{1,q}$ can be expressed, as we shall see below, in terms of the roots 
$\{\rho_k^+, k=1, \ldots, m^++1\}$ and $\{\rho_k^-, k=1, \ldots, m^-+1\}$ with positive and negative real parts 
of the Cram\'er-Lundberg equation
\begin{equation}
\label{eq:CramerLundberg}
q +  \Psi(-\mbf i s)= 0,\quad q>0.
\end{equation}
For the MEJD $X$ the Wiener-Hopf factors $\Psi^+_q$ and $\Psi^-_q$ can be identified explicitly.
It is well-known that  $\Psi_q^+(\theta)$ 
and $\Psi_q^-(\theta)$ have neither zeros nor poles on the half-planes $\{\Im(z)>0\}$ and $\{\Im(z)<0\}$
respectively, as a consequence of the fact that $\Psi_q^+$ 
and $\Psi_q^-$ are the characteristic functions of infinitely divisible distributions 
supported on the positive and negative half-lines respectively (see \cite[Ch. 9]{Sato1999}). 
In particular, using that $\Psi^+_q(\theta)$ 
and $\Psi^-_q(\theta)$ satisfy  $q/(q+\Psi(\theta)) =\Psi^+_q(\theta)\Psi^-_q(\theta)$ 
for $\theta\in\mbb R$, the Wiener-Hopf factors of a mixed-exponential jump-diffusion 
can be identified as certain rational functions (see \cite{LewisMoredcki}):

\begin{Lemma}Let $q>0$ be given. The functions $\Psi^+_q$ and $\Psi^-_q$
are given explicitly by
\begin{eqnarray}
\label{eq:Psi+}
\Psi^+_q(s) & :=  & \prod_{i=1}^{m^+}\left(1-\mbf i s/\alpha_i^+\right) 
\prod_{i=1}^{m^++1}\left(1-\mbf i s/\rho_i^+(q)\right)^{-1},\\
\label{eq:Psi-}
\Psi^-_q(s) & := & \prod_{j=1}^{m^-}\left(1+\mbf i s/\alpha_j^-\right) 
\prod_{j=1}^{m^-+1}\left(1-\mbf i s/\rho_j^-(q)\right)^{-1}.
\end{eqnarray}
\end{Lemma}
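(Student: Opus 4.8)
The plan is to leverage the factorisation identity $q/(q+\Psi(s))=\Psi^+_q(s)\Psi^-_q(s)$ together with the half-plane analyticity and normalisation properties recalled just above the statement, so that the proof reduces to a pole/zero bookkeeping for an explicit rational function plus a root count for \eqref{eq:CramerLundberg}. First I would clear denominators: multiplying $q+\Psi(s)$ by $Q(s):=\prod_{i=1}^{m^+}(\alpha_i^+-\mbf i s)\prod_{j=1}^{m^-}(\alpha_j^-+\mbf i s)$ turns it into a polynomial $P(s)$, whence $q/(q+\Psi(s))=qQ(s)/P(s)$. Since $\sigma>0$, the quadratic term $\sigma^2 s^2/2$ raises the degree by two, so $\deg P=m^++m^-+2$ with leading coefficient $c=\tfrac{\sigma^2}{2}(-\mbf i)^{m^+}(\mbf i)^{m^-}$. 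The zeros of $qQ(s)/P(s)$ are exactly the zeros of $Q$, sitting at $s=-\mbf i\alpha_i^+$ (lower half-plane) and $s=\mbf i\alpha_j^-$ (upper half-plane); these coincide with the prescribed zeros of the claimed factors \eqref{eq:Psi+}--\eqref{eq:Psi-}.

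The decisive step is to locate and count the poles, i.e.\ the roots of $P$, which are the points $-\mbf i\rho$ with $\rho$ a root of the Cram\'er--Lundberg equation \eqref{eq:CramerLundberg}. I would prove that \eqref{eq:CramerLundberg} has exactly $m^++1$ roots $\rho_i^+$ with $\Re(\rho)>0$ and $m^-+1$ roots $\rho_j^-$ with $\Re(\rho)<0$, and none with $\Re(\rho)=0$. Absence of roots on the imaginary axis is immediate: for real $\theta$ one has $\Re\Psi(\theta)\ge 0$ (as $\Psi$ is a genuine characteristic exponent, the mixing weights being constrained precisely so that $f$ is a bona fide density), whence $\Re(q+\Psi(\theta))\ge q>0$. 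For the split count I would apply the argument principle to $P'/P$ over the boundary of a large half-disc $\{\Re(s)>0,\ |s|<R\}$, the integrand being well defined on the imaginary segment by the no-imaginary-root fact and the winding along the semicircle being governed by the leading behaviour $P(s)\sim c\,s^{m^++m^-+2}$; letting $R\to\infty$ gives $m^++1$ roots in $\{\Re>0\}$, and $m^-+1$ in $\{\Re<0\}$ follows since the two counts must exhaust $\deg P$. In the HEJD sub-case the weights are nonnegative, the roots are real and interlace the $\alpha_k^\pm$, which makes the count elementary and provides a useful consistency check.

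With the count in hand I would factor $P(s)=c\prod_{i=1}^{m^++1}(s+\mbf i\rho_i^+)\prod_{j=1}^{m^-+1}(s+\mbf i\rho_j^-)$ and substitute into $qQ(s)/P(s)$. Using the elementary identities $(\alpha-\mbf i s)=\alpha(1-\mbf i s/\alpha)$, $(\alpha+\mbf i s)=\alpha(1+\mbf i s/\alpha)$ and $(s+\mbf i\rho)=\mbf i\rho(1-\mbf i s/\rho)$, and regrouping the factors by the half-plane in which their zero or pole lies (the $(\alpha_i^+,\rho_i^+)$ data into one factor, the $(\alpha_j^-,\rho_j^-)$ data into the other), the expression rearranges into the product of the two rational functions on the right-hand sides of \eqref{eq:Psi+} and \eqref{eq:Psi-}. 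The accumulated multiplicative constants cancel, which I would verify directly by evaluating at $s=0$: there $q/(q+\Psi(0))=1$ while each claimed factor equals $1$, so both the overall normalisation and the individual normalisations hold.

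Finally I would invoke uniqueness. The first regrouped function has all its zeros and poles at $-\mbf i\alpha_i^+$ and $-\mbf i\rho_i^+$, hence in $\{\Im z<0\}$, so it is analytic and zero-free on $\{\Im z>0\}$ and equals $1$ at the origin; symmetrically the second is analytic and zero-free on $\{\Im z<0\}$ with value $1$ at $0$. As their product is $q/(q+\Psi)$, these are exactly the half-plane analyticity and normalisation properties that characterise the Wiener-Hopf factors of the infinitely divisible laws supported on the two half-lines (recalled above), so by uniqueness of the factorisation they must equal $\Psi^+_q$ and $\Psi^-_q$, which is \eqref{eq:Psi+}--\eqref{eq:Psi-}. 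The main obstacle is the root-counting step of the second paragraph: for possibly negative mixing weights the Cram\'er--Lundberg roots need not be real, so the interlacing argument of the hyper-exponential case is unavailable and one must instead control the winding of $P$ by an argument-principle (or Rouch\'e) argument.
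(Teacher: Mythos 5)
Your overall plan follows exactly the identification strategy that the paper itself gestures at but does not carry out: the paper states this lemma without proof, citing Lewis and Mordecki, after recalling the factorisation identity $q/(q+\Psi(\theta))=\Psi^+_q(\theta)\Psi^-_q(\theta)$ and the zero/pole-free half-plane property of the factors. So your architecture (clear denominators, count the Cram\'er--Lundberg roots, regroup by half-plane, conclude by uniqueness) is the intended one, and you are right that the entire mathematical content sits in the root count for \eqref{eq:CramerLundberg}. But that decisive step, as you have sketched it, contains a genuine gap, in fact two. First, your contour lives in the wrong half-plane: the poles of $qQ(s)/P(s)$ are the points $s=-\mbf{i}\rho$, so Cram\'er--Lundberg roots with $\Re(\rho)>0$ (resp.\ $\Re(\rho)<0$) correspond to roots of $P$ with $\Im(s)<0$ (resp.\ $\Im(s)>0$), and the line that is free of roots is the \emph{real} $s$-axis, not the imaginary one. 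Your half-disc $\{\Re(s)>0,\ |s|<R\}$ is bounded by the imaginary $s$-axis, which is precisely where the roots sit: in the HEJD sub-case you invoke as a consistency check, all Cram\'er--Lundberg roots are real, so \emph{every} root of $P$ lies on the imaginary $s$-axis and your contour runs through them.

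Second, and more fundamentally, even on the correct half-discs the asymmetric split into $m^++1$ and $m^-+1$ cannot be ``governed by the leading behaviour $P(s)\sim c\,s^{m^++m^-+2}$'': the large semicircle contributes the same winding $(m^++m^-+2)\pi$ to either half-disc, and your exhaustion remark cannot break this symmetry (it would equally well ``prove'' $m^-+1$ roots in the lower half-plane). The asymmetry is produced entirely by the winding along the straight edge, which your sketch never computes. Concretely, for the lower half-disc traverse the real axis from $+R$ to $-R$ and write $P(\theta)=\bigl(q+\Psi(\theta)\bigr)\prod_{i=1}^{m^+}\bigl(\alpha_i^+-\mbf{i}\theta\bigr)\prod_{j=1}^{m^-}\bigl(\alpha_j^-+\mbf{i}\theta\bigr)$: the factor $q+\Psi(\theta)$ stays in $\{\Re>0\}$ (this is the quantitative use of your no-root-on-the-axis observation) and its endpoints are asymptotically large positive reals, so its argument variation is $o(1)$; each factor $\alpha_i^+-\mbf{i}\theta$ contributes $+\pi$ and each factor $\alpha_j^-+\mbf{i}\theta$ contributes $-\pi$. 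Adding the semicircle term gives total winding $(m^++m^-+2)\pi+(m^+-m^-)\pi+o(1)=2\pi(m^++1)+o(1)$, and since the count is an integer, $P$ has exactly $m^++1$ roots in $\{\Im(s)<0\}$, i.e.\ $m^++1$ Cram\'er--Lundberg roots with $\Re(\rho)>0$; only now does exhaustion give the remaining $m^-+1$. A final, smaller caveat concerns your uniqueness step: analyticity, zero-freeness and the normalisation at $s=0$ alone do not characterise the factors. Your candidate factors vanish at infinity, so the ratio of $\Psi^\pm_q$ to the candidates may grow linearly, and the gluing/Liouville argument must be run in its polynomial-growth form (entire, at most linear growth, hence affine, hence constant $1$ by normalisation), with the growth in the opposite half-plane controlled as well; this is standard but should be said, since it is the step where the boundedness properties of genuine Wiener--Hopf factors actually enter.
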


The fact that the Wiener-Hopf factors $\Psi^+_q$ and $\Psi^-_q$ are rational functions
implies that, when the roots of the Cram\'{e}r-Lundberg equation are distinct, 
the running supremum $\ovl X_{\Gamma_{1,q}}$ and infimum $\unl X_{\Gamma_{1,q}}$
of $X$ at $\Gamma_{1,q}$ , where $\ovl X_t := \sup_{s\leq t}X_s$ and $\unl X_t := \inf_{s\leq t}X_s$ denote 
the running supremum and infimum of $X$ at $t\in\mbb R_+$, also follow
mixed-exponential distributions. 

\begin{Lemma}\label{lem:XGq}Let $q>0$ be given and suppose that the roots of \eqref{eq:CramerLundberg}
are distinct.
The random variables $\ovl X_{\Gamma_{1,q}}$, $-\unl X_{\Gamma_{1,q}}$ and $X_{\Gamma_{1,q}}$
have mixed-exponential distributions with densities 
$\ovl u_{1,q}$, $\unl u_{1,q}$ and $u_{1,q}$
given by
\begin{eqnarray}
&& \ovl u_{1,q}(x) = 
\sum_{i=1}^{m^++1}A_i^+(q)\rho_i^+(q)\te{-\rho_i^+(q)x}, 
\q \unl u_{1,q}(x) = 
\sum_{j=1}^{m^-+1}A_j^-(q)(-\rho_j^-(q))\te{\rho_j^-(q)x},\q x>0,\\
&& u_{1,q}(x) =  \sum_{i=1}^{m^++1} B_i(q)\te{-\rho_i^+(q)x} I_{(0,\infty)}(x)+ 
\sum_{j=1}^{m^-+1}C_j(q) \te{-\rho_j^-(q)x}I_{(-\infty,0)}(x),\q x\in\mbb R,
\label{eq:PotentialDensity}
\end{eqnarray}
with, for $i=1, \ldots, m^++1$ and $j=1, \ldots, m^-+1$,
\begin{eqnarray}
&& A_i^+(q) := \frac{\prod_{k=1}^{m^+}(1-\rho_i^+(q)/\alpha_k^+)}{\prod_{k\ne i}(1-\rho_i^+(q)/\rho_k^+(q))},
\qquad
A_j^-(q) := \frac{\prod_{k=1}^{m^-}(1+\rho_j^-(q)/\alpha_k^-)}{\prod_{k\ne j}(1-\rho_j^-(q)/\rho_k^-(q))},\\
&& B_i(q) := A_i^+(q)\Psi^-_q(\rho_i^+(q))\rho^+_i(q), 
\qquad C_j(q) := A_j^-(q)\Psi^+_q(\rho_j^-(q))(-\rho_j^-(q)),\label{eq:BCij}
\end{eqnarray}
where we define $A_k^\pm\equiv1$ in the case 
$m^\pm=0$ (i.e. if there are no positive/negative jumps). 
\end{Lemma}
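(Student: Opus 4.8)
The plan is to read off all three densities from the Wiener--Hopf factorisation together with the explicit rational expressions for $\Psi^+_q$ and $\Psi^-_q$ recorded in \eqref{eq:Psi+}--\eqref{eq:Psi-}, reducing each Fourier inversion to a partial-fraction decomposition. Recall that, with $\Gamma_{1,q}$ exponentially distributed with mean $1/q$ and independent of $X$, the factorisation identifies $\Psi^+_q$ and $\Psi^-_q$ as the characteristic functions of $\ovl X_{\Gamma_{1,q}}$ and $\unl X_{\Gamma_{1,q}}$, while $q/(q+\Psi)=\Psi^+_q\Psi^-_q$ is the characteristic function of $X_{\Gamma_{1,q}}$. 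Since each of these is a rational function of $s$, with $\Psi^+_q$ (resp.\ $\Psi^-_q$) free of poles in the upper (resp.\ lower) half-plane as noted before the preceding Lemma, inverting the transform amounts to locating the poles and computing residues.

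First I would treat $\ovl X_{\Gamma_{1,q}}$. The numerator of $\Psi^+_q$ in \eqref{eq:Psi+} has degree $m^+$ and its denominator degree $m^++1$, and by the distinctness hypothesis all $m^++1$ poles, at $s=-\mbf i\rho^+_i(q)$, are simple; hence $\Psi^+_q$ admits the partial-fraction expansion $\Psi^+_q(s)=\sum_{i=1}^{m^++1}A^+_i(q)\,(1-\mbf i s/\rho^+_i(q))^{-1}$, the coefficient $A^+_i(q)$ being the residue obtained by cancelling the $i$-th factor and evaluating the remaining product at the pole, which is exactly the quantity displayed in the statement. Each summand is the characteristic function of an exponential law of rate $\rho^+_i(q)$, so term-by-term inversion yields $\ovl u_{1,q}(x)=\sum_i A^+_i(q)\rho^+_i(q)\te{-\rho^+_i(q)x}$ on $(0,\infty)$. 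The formula for $\unl u_{1,q}$ follows by the identical argument applied to $\Psi^-_q$, equivalently by the reflection $X\mapsto -X$.

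For $X_{\Gamma_{1,q}}$ I would decompose the product $\Psi^+_q\Psi^-_q$ directly. Its poles are precisely $\{-\mbf i\rho^+_i(q)\}\cup\{-\mbf i\rho^-_j(q)\}$, all simple, with those from $\Psi^+_q$ in the lower half-plane and those from $\Psi^-_q$ in the upper half-plane. The residue at $-\mbf i\rho^+_i(q)$ factorises as $A^+_i(q)$ times the value of the complementary factor $\Psi^-_q$ at that pole, producing the coefficient $B_i(q)$ of \eqref{eq:BCij}; symmetrically the residue at $-\mbf i\rho^-_j(q)$ produces $C_j(q)$. Closing the inversion contour in the lower half-plane for $x>0$ and in the upper half-plane for $x<0$, which is legitimate because the degree gap and the half-plane analyticity force the integrand to decay, then delivers \eqref{eq:PotentialDensity}, the $\rho^+$-poles contributing the $\te{-\rho^+_i(q)x}$ terms supported on $(0,\infty)$ and the $\rho^-$-poles the terms on $(-\infty,0)$. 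Conceptually this is the convolution of the supremum and infimum densities arising from the independence in the Wiener--Hopf decomposition, but the residue computation extracts the coefficients more cleanly.

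The main work, and the only genuine obstacle, is the bookkeeping that turns the residues into the closed forms $A^\pm_i,B_i,C_j$ exactly as stated: one must check the degree count guaranteeing no polynomial part, so that the laws are genuinely supported on the claimed half-lines, and confirm that the complementary factor evaluated at the pole, $\Psi^-_q(-\mbf i\rho^+_i(q))$, coincides with the expression written $\Psi^-_q(\rho^+_i(q))$ in \eqref{eq:BCij}, the real argument there being understood in the Laplace sense since the substitution $s=-\mbf i\rho^+_i(q)$ turns every occurrence of $\mbf i s$ into $\rho^+_i(q)$. The distinctness assumption is what keeps every pole simple and the partial-fraction expansion free of derivative terms, and its role should be flagged, since repeated roots would require the corresponding higher-order residue formulas.
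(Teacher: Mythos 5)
Your proposal is correct and follows essentially the same route as the paper's proof: partial-fraction decomposition of the rational Wiener--Hopf factors $\Psi_q^+$, $\Psi_q^-$ and of the product $q/(q+\Psi)=\Psi_q^+\Psi_q^-$, followed by term-by-term Fourier inversion, with $A_i^\pm$, $B_i$, $C_j$ arising as the coefficients (residues) at the simple poles. Your contour/residue bookkeeping and the observation that $\Psi_q^-(\rho_i^+(q))$ must be read in the Laplace sense merely spell out what the paper dismisses as ``straightforward to verify''.
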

\begin{proof}
It is straightforward to verify that 
the coefficients of the function $(1-\mathbf i s/\rho_i^+(q))^{-1}$ in the partial-fraction decompositions of 
the functions $q/(q+\Psi(s))$ and $\Psi^+_q(s)$ are given by $C_i(q)$ and $A_i^+(q)$, respectively, while the coefficients of 
the function $(1-\mathbf i s/\rho_j^-(q))^{-1}$ in the partial-fraction decompositions of the functions $q/(q+\Psi(s))$ and $\Psi^-_q(s)$ 
are given by $B_j(q)$ and $A_j^-(q)$ respectively. Subsequently inverting the 
Fourier transforms $(1-\mathbf i s/\rho_i^+(q))^{-1}$ and $(1-\mathbf i s/\rho_j^-(q))^{-1}$ 
yields the stated expressions for the densities of $\ovl X_{\Gamma_{1,q}}$, $-\unl X_{\Gamma_{1,q}}$ and $X_{\Gamma_{1,q}}$.
\end{proof}

The functions $\Omega_{n,q}$ and $D_{n,q}$ and the density $u_{n,q}$ can be explicitly identified by combining the forms of the functions $\Omega_{1,q}$ and $D_{1,q}$ (identified below) with the recursive relations in \eqref{eq:Drec} and \eqref{eq:Orec}.
From the form of these recursive relations it follows that 
the functions $\Omega_{n,q}$, $D_{n,q}$ and $u_{n,q}$ can be expressed as 
linear combinations of exponentials  
with the weights given by certain polynomials---the explicit expressions are given in the following result.

Consider the  polynomials 
$\WT P^\pm_{k,i,n}$, $\WT P^\pm_{i,j,k,n}$
and real numbers $\WT c^\pm_{i,j,n}$
defined by
\begin{eqnarray*}
&& \int_0^x P^+_{k,n}(y) \te{-\rho^+_k y - \rho_i^+(x-y)}\td y = \te{-\rho_k^+ x} \WT P^+_{k,i,n}(x) - 
\te{-\rho_i^+ x} \WT c^+_{k,i,n},\\ 
&& \int_x^0 P^-_{k,n}(y) \te{-\rho_k^-y -\rho_i^-(x-y)} \td y = 
\te{-\rho_k^- x} \WT P^-_{k,i,n}(x) - 
\te{-\rho_i^- x} \WT c^-_{k,i,n},\\
&& \int_0^x \te{\rho^+_i (z-x)} P_{i,j,n}(x-z,y-z)u_1(z)\td z = 
\sum_{k=1}^{m^++1}\WT P^+_{i,j,k,n}(x,y) \te{-\rho_k^+x},\\ 
&& \int_0^x \te{-\rho^-_j (x-z)} u_n(z)\td z = 
\sum_{k=1}^{m^-+1}\WT P^-_{i,j,k,n}(x) \te{-\rho_k^-x},
\end{eqnarray*}
where we denoted $\rho_h^+=\rho_h^+(q)$ and $\rho_h^-=\rho_h^-(q)$, and 
$P^+_{k,n}$ and $P^-_{k,n}$ are the polynomials to be defined shortly.
The fact that there exist polynomials and constants satisfying the above relations
follows by repeated integration by parts.  
By induction the following expressions for the functions $u_{n,q}$, $D_{n,q}$ and $\Omega_{n,q}$ can be derived:

\begin{Prop}\label{prop:DOU}
For any $n\in\mbb N\cup\{0\}$ we have 
\begin{eqnarray*}
u_{n+1,q}(x) &=& \sum_{k=1}^{m^++1} P^+_{k,n+1}(x) \te{-\rho^+_kx}I_{(0,\infty)}(x) + 
\sum_{k=1}^{m^-+1} P^-_{k,n+1}(x) \te{-\rho^-_kx}I_{(-\infty,0)}(x), \q x\in\mbb R,\\
D_{n+1,q}(x,y) &=& u_{n+1,q}(y) - 
\sum_{i=1}^{m^++1}\sum_{j=1}^{m^-+1} P_{i,j,n+1}(x,y)\te{-\rho_j^-(y-x) - \rho_i^+ x}, 
\q\text{$x\in\mbb R_+, x\ge y$}, \\
\Omega_{n+1,q}(x,y) &=& q^{-(n+1)}\cdot \sum_{k=1}^{n+1} u_{n+2-k,q}(x) u_{k,q}(y-x), \qquad \text{$x,y\in\mbb R$},
\end{eqnarray*}
with as before $\rho_j^-=\rho_j^-(q)$ and $\rho_i^+=\rho_i^+(q)$, and with
 $P^+_{k,1}\equiv B_k(q)$, $P^-_{k,1} \equiv C_k(q)$ and
$P_{i,j,1} \equiv \frac{E_{ij}(q)}{\rho_j^- - \rho_i^+} := \frac{A_i^+(q)A_j^-(q)\rho^+_i(q)\rho_j^-(q)}{\rho_j^- - \rho_i^+}$, 
and where $P^\pm_{k,n+1}$ and $P_{i,j,n+1}$ are polynomials 
and $c^\pm_{k,i,n}$ are real numbers
that are defined recursively for $n\in\mbb N$, as follows:
\begin{eqnarray*}
&& P^+_{k,n+1}(x) = \sum_{r=1}^{m^-+1} \le(C_r(q)\int_0^\infty \te{(\rho_r^--\rho_k^+)z}P^+_{k,n}(x+z)\td z 
+  B_k(q)c^-_{k,r,n}\ri)
+ \sum_{r=1}^{m^++1} B_r(q)\le(\WT P^+_{k,r,n}(x) - \WT c^+_{r,k,n} \ri),\\
&& P^-_{k,n+1}(x) = \sum_{r=1}^{m^++1} \le(B_r(q) \int_{-\infty}^0 \te{(\rho_r^+-\rho_k^-)z}P^-_{k,n}(x+z)\td z + C_k(q)c^+_{k,r,n}\ri)
+ \sum_{r=1}^{m^-+1} C_r(q)\le(\WT P^-_{k,r,n}(x) - \WT c^-_{r,k,n}\ri),\\
&& P_{i,j,n+1}(x,y) = \int_{-\infty}^0 P_{i,j,n}(x-z,y-z)\te{\rho^+_iz}u_1(z)\td z 
+ \sum_{k=1}^{m^++1}\WT P^+_{k,j,i,n}(x,y) - \sum_{k=1}^{m^-+1}\WT P^-_{i,k,j,n}(y-x)\\
&& +\ B_i(q) \int_0^\infty P_{j,n}^-(y-x-z) \te{(\rho_j^- - \rho_i^+)z}\td z
  + \frac{E_{i,j}(q)}{\rho_j^- - \rho_i^+} \int_{0}^{\infty}u_{n,q}(z) \te{\rho_j^- z}\td z\\
&& - \sum_{k=1}^{m^++1}\sum_{l=1}^{m^-+1} \frac{E_{k,l}(q)}{\rho_l^- - \rho_k^+}\int_{-\infty}^0 P_{i,j,n}(-z,y-x-z)\te{\rho_i^+z - \rho_l^-z}\td z,\\
&& c^-_{k,r,n} = \int_{-\infty}^0 \te{(\rho_k^+-\rho_r^-)z}P^-_{r,n}(z)\td z, \q 
c^+_{k,r,n} = \int_0^{\infty} \te{(\rho_k^--\rho_r^+)z}P^+_{r,n}(z)\td z.
\end{eqnarray*}
\end{Prop}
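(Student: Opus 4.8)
The plan is to argue by induction on $n$, treating the three families $u_{n,q}$, $D_{n,q}$ and $\Omega_{n,q}$ together, since they are coupled through the convolution identity $u_{n+1,q}=u_{n,q}*u_{1,q}$ (a consequence of $\Gamma_{n+1,q}\eqdistr\Gamma_{n,q}+\Gamma_{1,q}$ and the independence of the increments of $X$) and the recursions \eqref{eq:Drec}--\eqref{eq:Orec}. The base case $n=0$ is furnished by Lemma~\ref{lem:XGq}, which gives $u_{1,q}$ and hence $P^+_{k,1}\equiv B_k(q)$, $P^-_{k,1}\equiv C_k(q)$; the initial forms of $D_{1,q}$ and $\Omega_{1,q}$ follow from writing $D_{1,q}(x,y)=u_{1,q}(y)-\PP(\ovl X_{\Gamma_{1,q}}>x,\,X_{\Gamma_{1,q}}\in\td y)/\td y$, which produces $P_{i,j,1}=E_{ij}(q)/(\rho_j^--\rho_i^+)$ from the mixed-exponential overshoot density, and from the direct computation $\Omega_{1,q}(x,y)=q^{-1}u_{1,q}(x)u_{1,q}(y-x)$, obtained by writing the $q$-resolvent density $u_{1,q}(\cdot)=\int_0^\infty q\te{-qt}p_t(\cdot)\td t$ (with $p_t$ the transition density of $X$) and factorising over the two independent increments.

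The engine of the induction is a closure property: the class $\mathcal C$ of finite sums $\sum_k R_k(x)\te{-\rho^\pm_k x}$ with $R_k$ polynomial is stable under each integral operation appearing in the recursions. Integrating a member of $\mathcal C$ against $u_{1,q}$ (itself of $\mathcal C$-type on each half-line) and against the kernels $\te{-\rho(x-\cdot)}$ returns an element of $\mathcal C$; this is exactly what the auxiliary identities defining $\WT P^\pm_{k,i,n}$, $\WT P^\pm_{i,j,k,n}$ and the constants $\WT c^\pm_{i,j,n}$ encode, each obtained by repeated integration by parts. Granting this, the inductive step is the substitution of the assumed forms into the recursions followed by collection of terms according to their exponential rate. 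For $u_{n+1,q}$ one splits the convolution at the origin: since both factors carry the $\rho^+$-form on $(0,\infty)$ and the $\rho^-$-form on $(-\infty,0)$, a short sign analysis shows that on $\{x>0\}$ every surviving exponent is some $\rho_k^+$ and on $\{x<0\}$ some $\rho_k^-$, while the cross-region in which the two factors sit on opposite half-lines contributes the pure constants $c^\pm_{k,r,n}$. This delivers the stated recursion for $P^\pm_{k,n+1}$.

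The principal obstacle is the recursion for $D_{n+1,q}$, where essentially all the bookkeeping lives. Here one inserts the inductive expression for $D_{n,q}(x-w,y-w)$ together with $D_{1,q}(x,w)$ into \eqref{eq:Drec} and must simultaneously (i) split the $w$-integral at $0$, where $u_{1,q}$ and the kernel change functional form; (ii) respect the domain $\{x\ge y,\ x\ge 0\}$, which is precisely what makes the truncated ansatz $u_{n+1,q}(y)-\sum_{i,j}P_{i,j,n+1}(x,y)\te{-\rho_j^-(y-x)-\rho_i^+x}$ self-consistent; and (iii) track the boundary contributions generated when integrating the overshoot part, which are absorbed into the $\WT P$- and $E_{ij}$-terms. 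Matching the coefficient of each joint exponential $\te{-\rho_j^-(y-x)-\rho_i^+x}$ then reproduces the displayed recursion for $P_{i,j,n+1}$. I expect the delicate points to be verifying that the $u_{n+1,q}(y)$-part reproduces itself (equivalently, that $D_{n+1,q}\to u_{n+1,q}(y)$ as $x\to\infty$) and that every remaining term carries the correct joint exponent; no idea beyond careful integration by parts is required.

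Finally, the formula for $\Omega_{n+1,q}$ is cleanest to verify directly. Substituting the $\Gamma(n+1,q)$ density into the definition of $\Omega_{n+1,q}$ and splitting the time integral over the two independent increments gives, up to the normalising constant, $\int_0^\infty\!\!\int_0^\infty (u+s)^n\te{-q(u+s)}p_u(x)\,p_s(y-x)\,\td u\,\td s$; expanding $(u+s)^n$ by the binomial theorem and recognising each factor $\int_0^\infty t^{\,j-1}\te{-qt}p_t(\cdot)\,\td t$ as a scalar multiple of $u_{j,q}(\cdot)$ collapses the double integral into the stated single sum of products $u_{\cdot,q}(x)\,u_{\cdot,q}(y-x)$. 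Equivalently, one checks that this product form is a fixed point of \eqref{eq:Orec}, evaluating the two convolutions on the right-hand side via $u_{n,q}*u_{1,q}=u_{n+1,q}$; the scalar prefactor is then pinned down by the base case $\Omega_{1,q}=q^{-1}u_{1,q}(x)u_{1,q}(y-x)$.
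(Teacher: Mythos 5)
Your proposal follows essentially the same route as the paper's proof: the base cases are obtained exactly as there ($D_{1,q}$ from the Wiener--Hopf factorisation, i.e.\ independence of $\ovl X_{\Gamma_{1,q}}$ and the drawdown, combined with Lemma~\ref{lem:XGq}; $\Omega_{1,q}(x,y)=q^{-1}u_{1,q}(x)u_{1,q}(y-x)$ from the Markov property and stationarity of increments), and the general case follows by induction through the convolution identity $u_{n+1,q}=u_{n,q}*u_{1,q}$ and the recursions \eqref{eq:Drec}--\eqref{eq:Orec}; your ``closure'' observation for polynomial-times-exponential families is precisely the bookkeeping the paper encodes in the auxiliary polynomials $\WT P^{\pm}_{k,i,n}$, $\WT P^{\pm}_{i,j,k,n}$ and constants $\WT c^{\pm}_{i,j,n}$.

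There is, however, one concrete point where your argument and the displayed statement part ways, and you cannot assert both. Your two verifications of the $\Omega$-formula, carried out to the end, produce the prefactor $q^{-1}$ for \emph{every} $n$, not $q^{-(n+1)}$. In the binomial computation the term-by-term constant is
\[
\frac{q^{n+1}}{n!}\cdot\frac{n!}{j!\,(n-j)!}\cdot\frac{j!}{q^{j+1}}\cdot\frac{(n-j)!}{q^{n-j+1}}
\;=\; q^{-1},
\]
using $\int_0^\infty t^{j}\te{-qt}p_t(\cdot)\,\td t = (j!/q^{j+1})\,u_{j+1,q}(\cdot)$, so the double integral collapses to $q^{-1}\sum_{k=1}^{n+1}u_{n+2-k,q}(x)\,u_{k,q}(y-x)$. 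Likewise, your fixed-point check: inserting the ansatz $\Omega_{n,q}(x,y)=c_n\sum_{k=1}^{n}u_{n+1-k,q}(x)\,u_{k,q}(y-x)$ into \eqref{eq:Orec} yields
\[
\Omega_{n+1,q}(x,y)\;=\;q^{-1}u_{1,q}(x)\,u_{n+1,q}(y-x)\;+\;c_n\sum_{k=1}^{n}u_{n+2-k,q}(x)\,u_{k,q}(y-x),
\]
which forces $c_{n+1}=q^{-1}=c_n$; the form with $q^{-(n+1)}$ is \emph{not} a fixed point of \eqref{eq:Orec} for $n\geq 1$, and it is also inconsistent dimensionally ($\Omega_{n,q}$ has dimension time per squared length, $u_{n,q}$ one per length). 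So your method is sound, but your claim that the computation ``collapses into the stated single sum'' does not hold verbatim: executed carefully, it shows the exponent $-(n+1)$ in the Proposition should read $-1$ (a typo in the statement, already visible by comparing the $n=1$ case with the base case $\Omega_{1,q}=q^{-1}u_{1,q}(x)u_{1,q}(y-x)$ fed through \eqref{eq:Orec}). State this discrepancy and the corrected prefactor explicitly rather than asserting agreement with the displayed formula.
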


\begin{proof}
By combining the identity 
$\PP[\overline{X}_{\Gamma_{1,q}} \in\td x, x - X_{\Gamma_{1,q}} \in\td z] 
= \PP[\overline{X}_{\Gamma_{1,q}} \in\td x]\PP[ -\unl X_{\Gamma_{1,q}} \in\td z]$, $x,z\in\mbb R_+$,
(which follows from the Wiener-Hopf factorisation of $X$)
with Lemma~\ref{lem:XGq} and performing a one-dimensional integration, we get the expression for the function $D_{1,q}$. 
The Markov property and stationarity of increments yields $\Omega_{1,q}(x,y)= q^{-1}  u_{1,q}(y-x)\>u_{1,q}(x)$, whence 
we have the form of the function $\Omega_{1,q}$ by inserting the expression~\eqref{eq:PotentialDensity} for $u^q$ .
The expressions for $u_{n+1,q}$, $D_{n+1,q}$ and $\Omega_{n+1,q}$ follow by induction with respect to $n$, utilising (i) the fact that $u_{n+1,q}$ is equal to the convolution of $u_{n,q}$ and $u_{1,q}$, as a 
consequence of the independence and stationarity of the increments of $X$, 
(ii) the form of $D_{1,q}$ 
and the recursive relation in \eqref{eq:Drec}, 
and (iii) the form of $\Omega_{1,q}$  and 
the recursive relation in \eqref{eq:Orec}.
\end{proof}

\section{Convergence and error-estimates}\label{sec:conv}

The randomisation method consists in approximating the value $f(t)$ of a function $f$ at time $t>0$ 
  by the expectation $\E[f(\Gamma_{n,n/t})]$ of $f$ evaluated at a random time $\Gamma_{n, n/t}$ that follows a Gamma distribution 
	with expectation $\E[\Gamma_{n,n/t}] = t$ and variance $\E[(\Gamma_{n,n/t}-t)^2] = t^2/n$. 
Since the random variables $\Gamma_{n, n/t}$  converges in distribution to $t$ as $n$ tends to infinity, the error $\E[f(\Gamma_{n,n/t})]-f(t)$ converges to zero 
for any bounded and continuous function $f$.  The error can be expanded in terms of powers of $1/n$ provided that $f$ is sufficiently smooth, as shown in the following result:
\begin{Thm}\label{lem:conv} Let $k$ be a given non-negative integer and consider 
$f\in C^{2k+2}(\mbb R_+)$. There exist functions 
$b_1, \ldots, b_{k+1}:\mbb R_+\to\mbb R$ such that we have, for any $t\in\mbb R_+$,
\begin{equation}\label{eq:fGnn}
n^{k+1}\le[\E[f(\Gamma_{n,n/t})] - f(t) - \sum_{m=1}^k b_m(t) \le(\frac{1}{n}\ri)^m\ri] = 
b_{k+1}(t) + o(1)\q\text{as $n\to\infty$}.
\end{equation}
In particular, denoting by $f^{(m)}$ the $m$th derivative of $f$, we have 
\begin{eqnarray*}
&& b_1(t) = \frac{t^2}{2} f^{(2)}(t),\q b_2(t) = \frac{t^4}{8} f^{(4)}(t) + \frac{t^3}{3} f^{(3)}(t),\q
b_3(t) = \frac{t^6}{48}f^{(6)}(t) + \frac{t^5}{6}f^{(5)}(t) + \frac{t^4}{4}f^{(4)}(t),\\
&& b_4(t) = \frac{t^8}{384} f^{(8)}(t) + \frac{t^7}{24}f^{(7)}(t) + \frac{13t^6}{72}f^{(6)}(t) + \frac{t^5}{5} f^{(5)}(t).
\end{eqnarray*}
\end{Thm}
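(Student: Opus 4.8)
The plan is to reduce the statement to an analysis of the central moments of the Gamma law combined with a controlled Taylor remainder. Throughout write $Y_n := \Gamma_{n,n/t} - t$, a centred random variable whose cumulants are $\kappa_1 = 0$ and $\kappa_j = (j-1)!\,t^j\,n^{1-j}$ for $j \geq 2$ (the standard cumulants of the Gamma$(n,n/t)$ law, with the first shifted to zero). Denote the central moments by $\mu_{j,n} := \E[Y_n^j]$. Since $f \in C^{2k+2}(\mbb R_+)$, Taylor's formula with integral remainder gives
\begin{equation*}
f(t+y) = \sum_{j=0}^{2k+2}\frac{f^{(j)}(t)}{j!}\,y^j + R_n(y), \qquad R_n(y) = \frac{y^{2k+2}}{(2k+1)!}\int_0^1(1-u)^{2k+1}\bigl[f^{(2k+2)}(t+uy)-f^{(2k+2)}(t)\bigr]\,\mathrm{d}u .
\end{equation*}
Taking expectations and using $\mu_{0,n}=1$, $\mu_{1,n}=0$, we obtain
\begin{equation*}
\E[f(\Gamma_{n,n/t})] - f(t) = \sum_{j=2}^{2k+2}\frac{f^{(j)}(t)}{j!}\,\mu_{j,n} + \E[R_n(Y_n)].
\end{equation*}

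First I would treat the polynomial part. By the moment--cumulant formula applied to the centred variable $Y_n$, the central moment $\mu_{j,n}$ equals the sum over partitions $\pi$ of $\{1,\dots,j\}$ with every block of size at least two of $\prod_{B\in\pi}\kappa_{|B|}$; since $\kappa_{|B|}$ carries the factor $n^{1-|B|}$, a partition with $|\pi|$ blocks contributes at order $n^{|\pi|-j}$. Hence $\mu_{j,n}$ is a polynomial in $1/n$ with leading order $n^{-\lceil j/2\rceil}$, whose coefficient of $n^{-m}$ is $t^j\sum_{\pi}\prod_{B}(|B|-1)!$, the sum running over the finitely many partitions of $\{1,\dots,j\}$ into exactly $j-m$ blocks each of size $\ge 2$. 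Collecting the coefficient of $n^{-m}$ across all $j$ shows that only the indices $m+1\le j\le 2m$ contribute, and yields
\begin{equation*}
b_m(t) = \sum_{j=m+1}^{2m}\frac{f^{(j)}(t)}{j!}\,t^j\sum_{\substack{\pi\,\vdash\,\{1,\dots,j\}\\ |\pi|=j-m,\ |B|\ge 2}}\ \prod_{B\in\pi}(|B|-1)! .
\end{equation*}
In particular the powers $n^{-1},\dots,n^{-(k+1)}$ reproduce $b_1(t),\dots,b_{k+1}(t)$, while the remaining powers from the polynomial part are $O(n^{-(k+2)})=o(n^{-(k+1)})$; the stated values of $b_1,\dots,b_4$ then follow by direct enumeration of partitions (for instance the three pairings of four points give the coefficient $\tfrac{t^4}{8}f^{(4)}$ in $b_2$).

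The main obstacle is controlling $\E[R_n(Y_n)]$, which must be shown to be $o(n^{-(k+1)})$. I would use the bound $|R_n(y)| \le \tfrac{|y|^{2k+2}}{(2k+2)!}\,\omega(|y|)$, with $\omega(\delta):=\sup_{|s|\le\delta}\sup_{0\le u\le 1}|f^{(2k+2)}(t+us)-f^{(2k+2)}(t)|$ a modulus of continuity of $f^{(2k+2)}$, and split the expectation at a fixed threshold $\delta>0$. On $\{|Y_n|\le\delta\}$ one gets at most $\tfrac{\omega(\delta)}{(2k+2)!}\,\mu_{2k+2,n} = \omega(\delta)\,O(n^{-(k+1)})$; on $\{|Y_n|>\delta\}$ one uses a higher even moment, $\E[|Y_n|^{2k+2}\mathbf 1_{\{|Y_n|>\delta\}}] \le \delta^{-2}\,\mu_{2k+4,n} = O(n^{-(k+2)})$. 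Taking $\limsup_n n^{k+1}\E|R_n(Y_n)|$ leaves only $\tfrac{C}{(2k+2)!}\omega(\delta)$, and letting $\delta\downarrow 0$ forces the limit to be zero, so $\E[R_n(Y_n)]=o(n^{-(k+1)})$. The delicate point is the tail estimate: bounding $R_n$ for large $y$ requires growth control, which is immediate if $f^{(2k+2)}$ is bounded, and otherwise is handled by writing $R_n(y)=f(t+y)-\sum_{j\le 2k+2}\tfrac{f^{(j)}(t)}{j!}y^j$ and absorbing the contribution of $f(t+y)$ against the exponential tails of the Gamma law.

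Finally, I would assemble the two parts: the polynomial part contributes $\sum_{m=1}^{k+1}b_m(t)n^{-m}+o(n^{-(k+1)})$ and the remainder is $o(n^{-(k+1)})$, whence $\E[f(\Gamma_{n,n/t})]-f(t)-\sum_{m=1}^{k}b_m(t)n^{-m} = b_{k+1}(t)\,n^{-(k+1)}+o(n^{-(k+1)})$. Multiplying through by $n^{k+1}$ yields \eqref{eq:fGnn}.
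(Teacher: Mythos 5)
Your proposal is correct and follows essentially the same route as the paper's own proof: Taylor-expand $f$ around $t$, observe that the central moments of $\Gamma_{n,n/t}$ are polynomials in $1/n$ of the appropriate order (the paper extracts this from the moment generating function $M(u)=(1-ut/n)^{-n}\mathrm{e}^{-ut}$, you from the cumulant--partition formula), and collect powers of $1/n$. If anything, your write-up is more complete than the paper's, which compresses the remainder estimate into ``straightforward manipulations''; your modulus-of-continuity splitting of $\E[R_n(Y_n)]$ and the explicit partition formula for the coefficients $b_m$ supply exactly the details the paper leaves implicit.
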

\begin{rems}\rm
(i) Theorem~\ref{lem:conv} implies that for $f\in C^2(\mbb R_+)$ the error of the approximation of $f(t)$ by 
$\E[f(\Gamma_{n,n/t})]$ decays linearly, that is, 
$\E[f(\Gamma_{n,n/t})] - f(t) = \frac{b_1(t)}{n} + o(\frac{1}{n})$ as $n$ tends to infinity.

(ii) Theorem~\ref{lem:conv} also provides a justification of the use of the Richardson extrapolation to increase the speed of convergence if the function $f$ is sufficiently smooth. 
Since the error of the approximation is given in terms of positive integer powers of $1/n$,
 the Richardson extrapolation that utilises the first $N$ values 
$\E[f(\Gamma_{1,1/t})]$, $\ldots$, $\E[f(\Gamma_{N,N/t})]$ is explicitly given by
 \begin{eqnarray}
P_{1:N} &=& \sum_{k=1}^N \frac{(-1)^{N-k} k^N}{k! (N-k)!}\E[f(\Gamma_{k,k/t})], \label{eq:MarchukEnhancement}
\end{eqnarray}
(see \cite[\S1.3]{Marchuk1983} for a derivation of this formula). Note in particular 
that in order to deploy the extrapolation \eqref{eq:MarchukEnhancement} it suffices to know the existence of functions $b_m$ 
such that \eqref{eq:fGnn} holds and it is not required to find their explicit form. 
In the case $f\in C^{2k+2}(\mbb R_+)$, $k<N$ Theorem~\ref{lem:conv} implies that
the error $P_{1:N} - f(t)$ of the interpolation $P_{1:N}$ 
is $o(N^{-k-1})$. In particular, if $f$ is $C^\infty$ 
then the error $P_{1:N} - f(t)$  is $O(N^{-k-1})$ for every $k$, as $N$ 
tends to infinity. Refer to \cite{Sidi2003} for background on the theory of extra- and interpolation.
\end{rems}

\begin{proof}[Proof of Theorem \ref{lem:conv}]
While we expect this result to be known in the literature, we have not been able to find a reference and provide a brief 
proof. Taylor's theorem and the fact that $f\in C^{2k+2}$ imply
$$
f(s) - f(t) = \sum_{m=1}^{2k+1}\frac{(s-t)^m}{m!} f^{(m)}(t) + R(s,t)
$$
where the remainder term is given by $R(s,t) = \frac{(s-t)^{2k+2}}{(2k+2)!}f^{(2k+2)}(\xi)$ for some $\xi$ between $s$ and $t$. 
Replacing $s$ by the independent Gamma random variable $\Gamma_{n,n/t}$ we get
$$
\E[f(\Gamma_{n,n/t}) - f(t)] = \sum_{m=2}^{2k+1} \frac{a_{m,n}}{m!} f^{(m)}(t) + \E[R(\Gamma_{n,n/t},t)]
$$
with $a_{m,n}=\E[(\Gamma_{n,n/t}-t)^m]$, where we have $a_{1,n}=0$ as the expectation $\E[\Gamma_{n,n/t}]$ is equal to $t$. 
The numbers $a_{m,n}$ are equal to $a_{m,n} = \le.\frac{\td^m}{\td u^m}\ri|_{u=0} M(u)$ where 
$M$ denotes the moment-generating function of the random variable $\Gamma_{n,n/t}-t$ which is given by
$$
M(u) = \le(1 - \frac{ut}{n}\ri)^{-n} \exp\{- ut \}, \qquad u \leq \frac{n}{t}.
$$
In particular, it follows from the form of $M$ that the $a_{m,n}$ are linear combinations 
of positive integer powers of $1/n$. Reordering of terms and straightforward manipulations result in the identity in 
\eqref{eq:fGnn}. 
\end{proof}

We next turn to the problem of approximation of the distribution of the supremum and
the expected occupation time of the set $(-\infty,x]$ 
of the {\it L\'{e}vy bridge process} $X^{(0,0)\to (t,y)}$ from $(0,0)$ to $(t,y)$ 
(its definition is recalled in Appendix~\ref{sec:BridgeSampling}): 
\begin{eqnarray}\label{eq:md}
&&\vec{d}_{t}(x,y):=\PP\le(\ovl X^{(0,0)\to(t,y)}\leq x\ri),\q
\vec{\w}_{t}(x,y):=\E\le[\int_0^t I_{\le\{X_u^{(0,0)\to(t,y)}\leq x\ri\}}
\td u\ri],\q\text{with}\\
&& \ovl X^{(0,0)\to(t,y)}:= \sup_{u\in[0, t]} X^{(0,0)\to(t,y)}_u.\nonumber
\end{eqnarray}
By spatial and temporal homogeneity of $X$, 
the corresponding quantities in the case of a general 
starting point $(s,z)$ are given in terms 
of $\vec d$ and 
$\vec\w$ by $\vec{d}_{t-s}(x-z,y-z)$ and $\vec\w_{t-s}(x-z,y-z)$.
The approximations of $\vec d$ and 
$\vec\w$ are given 
in terms of the randomised bridge process $X^{(0,0)\to(\Gamma_{n,q},y)}$ (see Appendix~\ref{sec:BridgeSampling})
as follows:
\begin{eqnarray*}
&& \vec D^{(n)}_q(x,y) := \PP\le(\ovl X^{(0,0)\to(\Gamma_{n,q},y)}
\leq x\ri), \q \vec\Omega^{(n)}_q(x,y) 
:= \E\le[\int_0^{\Gamma_{n,q}} I_{\le\{X_u^{(0,0)\to(\Gamma_{n,q},y)}
\leq x\ri\}}\td u\ri].
\end{eqnarray*}
We derive next error estimates for these randomised bridge approximations.

\begin{Cor}\label{cor:error}Let $x$, $y\in\mbb R$ and $t>0$. For some constants $C^d$ and $C^\w$ 
we have, for all positive integers $n$,
\begin{equation}\label{eq:est}
\le|\vec D^{(n)}_{n/t}(x,y) - \vec{d}_t(x,y)\ri| \leq \frac{C^d}{n}, \q 
\le|\vec \Omega^{(n)}_{n/t}(x,y) - \vec{\w}_t(x,y)\ri| \leq \frac{C^\w}{n}.
\end{equation}
\end{Cor}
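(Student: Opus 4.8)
The plan is to reduce the estimate to the one-dimensional expansion of Theorem~\ref{lem:conv}, applied separately to the numerator and the denominator of the conditional (bridge) quantities. First I would express the randomised bridge functionals as ratios of the unnormalised quantities of Proposition~\ref{prop:DOU}. Since the randomised bridge $X^{(0,0)\to(\Gamma_{n,q},y)}$ is by construction the process $X$ conditioned on the value $X_{\Gamma_{n,q}}=y$, conditioning on $X_{\Gamma_{n,q}}$ gives
\[
\vec D^{(n)}_q(x,y) = \frac{D_{n,q}(x,y)}{u_{n,q}(y)}, \qquad
\vec\Omega^{(n)}_q(x,y) = \frac{\int_{-\infty}^x \Omega_{n,q}(z,y)\,\td z}{u_{n,q}(y)}.
\]

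Next I would rewrite the three building blocks as plain Gamma-averages. Conditioning on $\{\Gamma_{n,q}=s\}$ and using that the bridge is $X$ conditioned on its value, so that $\PP(\ovl X_s\le x\mid X_s=y)=\vec d_s(x,y)$ and $\E[\int_0^s I_{\{X_u\le x\}}\td u\mid X_s=y]=\vec\w_s(x,y)$, I obtain, with $p_s$ the (strictly positive) density of $X_s$ and $q=n/t$,
\[
D_{n,n/t}(x,y)=\E[f_1(\Gamma_{n,n/t})], \quad u_{n,n/t}(y)=\E[f_2(\Gamma_{n,n/t})], \quad \int_{-\infty}^x\Omega_{n,n/t}(z,y)\,\td z=\E[g_1(\Gamma_{n,n/t})],
\]
where $f_1(s):=\vec d_s(x,y)\,p_s(y)$, $f_2(s):=p_s(y)$ and $g_1(s):=\vec\w_s(x,y)\,p_s(y)$. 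The point is that $\vec d_t(x,y)=f_1(t)/f_2(t)$ and $\vec\w_t(x,y)=g_1(t)/f_2(t)$, so the problem is now to compare a ratio of Gamma-averages with the corresponding ratio of values at $t$.

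I would then invoke Theorem~\ref{lem:conv} with $k=0$ for each of $f_1,f_2,g_1$, giving $A_n:=\E[f_1(\Gamma_{n,n/t})]=f_1(t)+O(1/n)$, $B_n:=\E[f_2(\Gamma_{n,n/t})]=f_2(t)+O(1/n)$, and similarly for $g_1$. Since $f_2(t)=p_t(y)>0$ (the density is strictly positive because $\sigma>0$) and $B_n\to f_2(t)$, the denominator satisfies $B_n\ge p_t(y)/2$ for all large $n$, and the elementary quotient identity
\[
\frac{A_n}{B_n}-\frac{f_1(t)}{f_2(t)}=\frac{A_n f_2(t)-f_1(t)B_n}{B_n\,f_2(t)}
\]
then yields $\vec D^{(n)}_{n/t}(x,y)-\vec d_t(x,y)=O(1/n)$; the occupation-time bound follows identically with $g_1$ in place of $f_1$. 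For the finitely many small $n$ the crude bounds $0\le\vec D^{(n)}_{n/t},\vec d_t\le1$, together with finiteness of the expected occupation times, let me enlarge the constants $C^d$ and $C^\w$ so that \eqref{eq:est} holds for every $n\in\mbb N$, thereby upgrading the asymptotic $O(1/n)$ to the stated uniform bound.

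The hard part is the regularity in the time variable $s$ needed to apply Theorem~\ref{lem:conv}, namely that $f_1,f_2,g_1\in C^{2}(\mbb R_+)$ with derivatives of controlled growth. Smoothness of $f_2(s)=p_s(y)$ is immediate from $\sigma>0$: the characteristic function $\te{-s\Psi(\cdot)}$ decays like a Gaussian, so the Fourier inversion defining $p_s(y)$ may be differentiated in $s$ arbitrarily often (cf.\ Lemma~\ref{lem:density}). For the first-passage and occupation sub-densities $f_1$ and $g_1$ I would argue through their Laplace transforms in $s$: since $\Gamma_{1,q}$ is exponentially distributed with rate $q$, up to the factor $q$ these transforms are exactly the explicit functions $D_{1,q}(x,y)$ and $\int_{-\infty}^x\Omega_{1,q}(z,y)\,\td z$ of Lemma~\ref{lem:XGq} and Proposition~\ref{prop:DOU}, which depend smoothly on $q$ through the Cram\'er--Lundberg roots $\rho_k^\pm(q)$ and decay suitably in $q$; Laplace inversion then delivers the required smoothness of $s\mapsto f_1(s),g_1(s)$ on $(0,\infty)$. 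The only remaining care concerns the behaviour near $s=0$ and as $s\to\infty$, but because $\Gamma_{n,n/t}$ concentrates at $t$ with exponentially small tails, these regions contribute negligibly to the Gamma-averages and to the remainder term in the proof of Theorem~\ref{lem:conv}. Establishing this time-regularity of the bridge sub-densities is the crux of the argument; everything else is the quotient estimate above.
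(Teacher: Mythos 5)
Your proposal is correct and takes essentially the same approach as the paper: the paper's proof likewise applies Theorem~\ref{lem:conv} to the three functions $s\mapsto\vec d_s(x,y)p_s(y)$, $s\mapsto\vec\w_s(x,y)p_s(y)$ and $s\mapsto p_s(y)$ (your $f_1$, $g_1$, $f_2$), using $p_t(y)>0$ to pass to the bridge ratios. The quotient identity, the enlargement of constants for small $n$, and the $C^2$ time-regularity that you elaborate are precisely the steps the paper leaves implicit (it simply asserts the $C^2$ property at $s=t$ without proof).
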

\begin{proof} Since
the distribution of $X_t$ has a continuous density $p_t(y)$
and $s\mapsto \vec d_s(x,y)$, 
$s\mapsto \vec\w_s(x,y)$ and $s\mapsto p_s(y)$ are $C^2$ at $s=t$ with $p_t(y)>0$,
the estimates in \eqref{eq:est} 
follow by applying Theorem~\ref{lem:conv} to the functions $t\mapsto\vec{d}_t(x,y)p_t(y)$, $t\mapsto\vec{\w}_t(x,y)p_t(y)$ and $t\mapsto p_t(y)$.
\end{proof}

\section{Numerical illustration: first-passage time probabilities 
and occupation times}
\label{sec:NumericalResults}
To provide a numerical illustration of the randomisation method,
we implemented the recursive formulas (given in Proposition~\ref{prop:DOU}) 
to approximate the following expectations of path-dependent functionals:
\begin{eqnarray*}
\PP\le(\sup_{u\in[0, t]} X^{(0,x)\to(t,y)}_u \leq z\ri),
\q \E\le[\int_0^t I_{\le\{X_u^{(0,x)\to(t,y)}\in (a,b)\ri\}}
\td u\ri]\q
\begin{array}{l}
x=1, y=1.1, z=1.2,\\ 
t=1, a = 1.05, b = 1.25,
\end{array}
\end{eqnarray*}
for the case\footnote{See \cite[Chapter 3]{Johannes-thesis} for additional numerical examples.}
that the underlying 
L\'{e}vy process $X$ is equal to  a 
HEJD process with typical parameters, which are detailed 
in Table~\ref{tab:ModelParameters}.  The outcomes 
are reported in Table~\ref{tab:ConvergenceOneSided}
and Figure~\ref{fig:Convergence}. In Table~\ref{tab:ConvergenceOneSided} 
the values are listed of the first-passage probabilities and the expected occupation times of the randomised L\'{e}vy bridges corresponding to a 
$\Gamma(n,n)$-randomisation of the fixed time $T=1$ for a number of values of $n$.  
We also reported the results obtained by applying a Richardson extrapolation $P_{1:n}$ of order $n$, using the first $n$ outcomes 
(defined in \eqref{eq:MarchukEnhancement}).  
The logarithms of the corresponding absolute errors are plotted 
in Figure~\ref{fig:Convergence}. The errors were computed 
with respect to the value $P_{1:11}$ that was obtained 
after Richardson's extrapolation with $n=11$ stages. 

\begin{table}[thbp!]
\caption[Chosen model parameters]{\small The model parameters used throughout the paper. 
The parameters for the Kou model are taken from \cite{Kou2004}, the ones for the HEJD model from~\cite{Jeannin2010}, and the ones for the MEJD model from \cite{Cai2011} (which for the latter two models
have been re-expressed using our notation).~\label{tab:ModelParameters}}
\begin{center}
\footnotesize{
\begin{tabular}{lllll}
\hline
& KOU & HEJD  & MEJD  \\
\hline
  $\sigma$ & 0.2 & $\sqrt{0.042}$ & 0.2  \\
  $\lambda$  & 3.0 & 11.5 & 1.0 \\
  $\alpha^+$  & 50 & (5, 10, 15, 25, 30, 60, 80) & (213.0215, 236.0406, 237.1139, 939.7441, 939.8021) \\
  $\alpha^-$  & 25 & (5, 10, 15, 25, 30, 60, 80) & (213.0215, 236.0406, 237.1139, 939.7441, 939.8021) \\
  $p^+$  & 0.3 & $(0.05, 0.05, 0.1, 0.6, 1.2, 1.9, 6.1)*0.51/\lambda$ & (4.36515, 1.0833, -5, 0.0311, 0.02045) \\
  $p^-$  & 0.7 & $(0.5, 0.3, 1.1, 0.8, 1, 4, 2.3)*0.64/\lambda$ & (4.36515, 1.0833, -5, 0.0311, 0.02045) \\
\hline
\hline 
\end{tabular}
}
\end{center}
\end{table}
\begin{figure}[tp!]
\centering
\noindent\mbox{\subfigure{\includegraphics[scale=0.4]{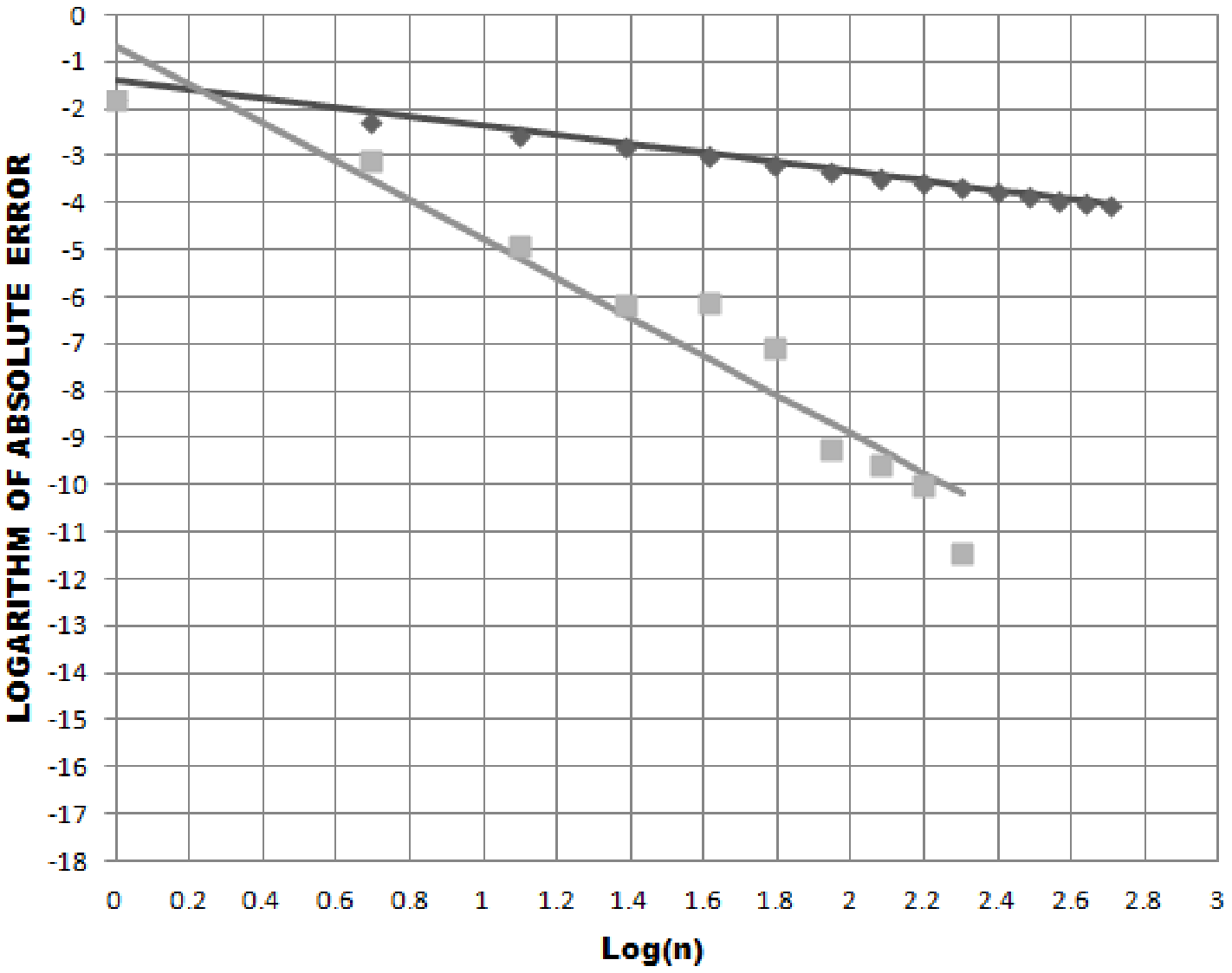}}
\subfigure{\includegraphics[scale=0.4]{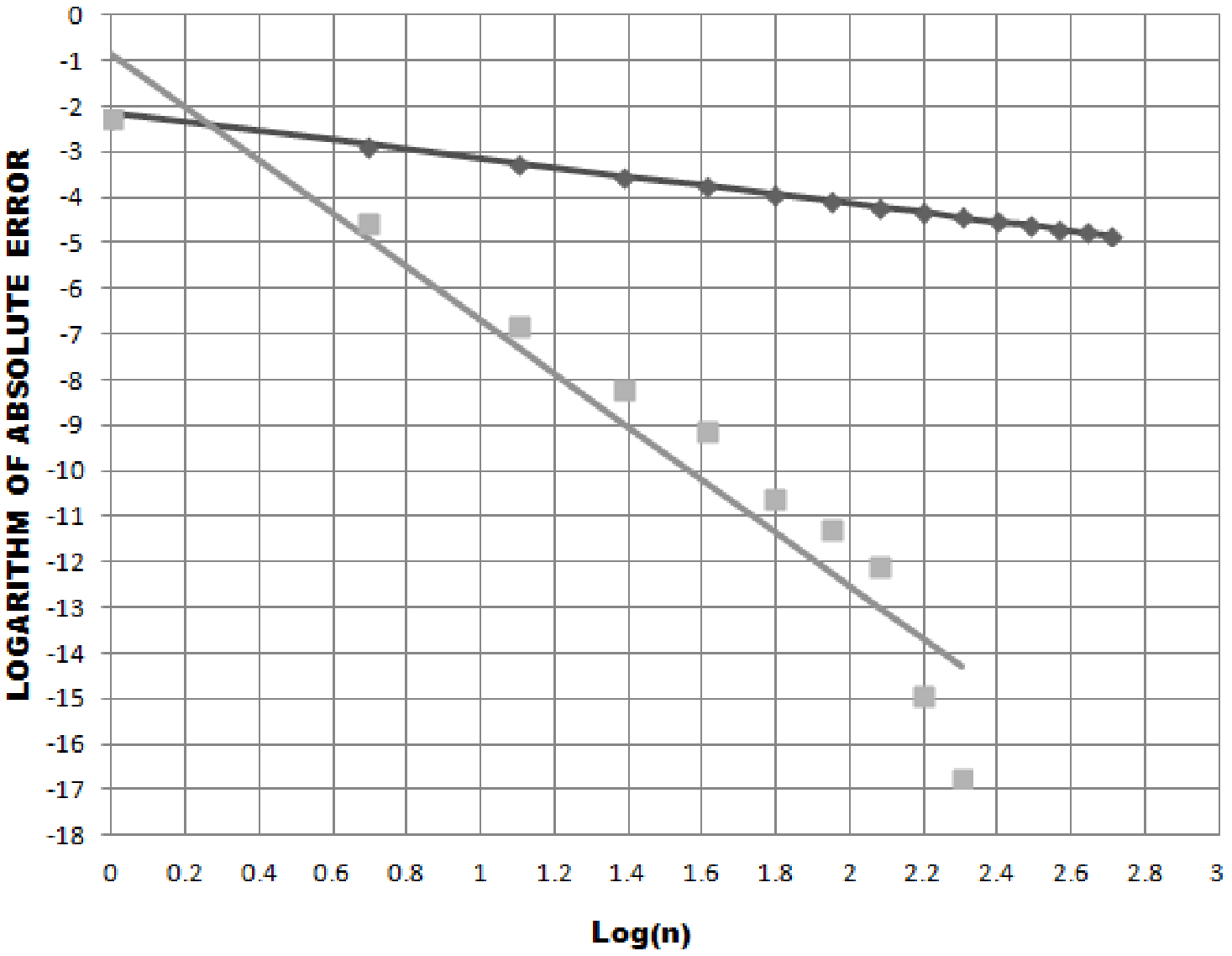}}}\\
\subfigure{\includegraphics[scale=0.7]{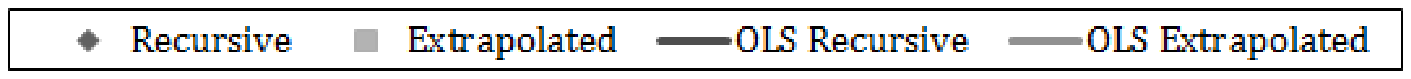}}
\caption{\footnotesize The logarithms of the absolute errors 
of the outcomes generated by the recursive algorithm for (a) the one-sided first-passage probabilities 
and (b) the expected occupation time under the HEJD processes as a function of $n$, where $n$ is the number of steps in the recursions. 
In each sub-figure the errors of the recursive values and the Richardson extrapolated values are displayed. Also ordinary least square estimations of either series of results are plotted (in the case of the un-extrapolated values the OLS line was estimated using the last six values only). The slopes of the dark lines in sub-figures (a) and (b) are 
given by $-0.98$ and $-0.99$, respectively.  
The starting point of the bridge is $1.0$, the end point is $1.1$ and the barrier level is $1.2$ and the range is $(1.05,1.25)$. In all cases the L\'{e}vy bridge process is assumed to start at time $0$ and to end at time $1$. The model parameters that were used are given in Table~\ref{tab:ModelParameters}.}
\label{fig:Convergence}
\end{figure}
{\footnotesize
\begin{table}[tp!]
\caption{\footnotesize Approximations of one-sided first-passage time (FPT) probabilities  and expected occupation times obtained recursively ($P_n$) and with Richardson extrapolation ($P_{1:n}$) for the HEJD model as a function of $n$, where $n$ is the number of recursions. 
The starting point of the bridge is assumed to be $1.0$, the end point is $1.1$, the barrier level is $1.2$ and the range is  $(1.05,1.25)$.
In all cases the L\'{e}vy bridge is assumed to start at time $0$ and to 
end at time $1$. The model parameters r
that were used are given in Table~\ref{tab:ModelParameters}. 
}
\begin{center}
\begin{tabular}{rr|c|rr}
\hline
\multicolumn{3}{l}{FPT probability}&  \multicolumn{2}{r}{Expected occupation time}\\
\hline
      $ P_n $ HEJD & $ P_{1:n} $ HEJD & $n$
&  $ P_n $ HEJD & $ P_{1:n} $ HEJD \\
\hline
    0.3006853 & 0.3006853 & 1      & 0.3680801 & 0.3680801 \\
    0.3617512 & 0.4228170 & 2      & 0.4142655 & 0.4604509 \\
    0.3911554 & 0.4635372 & 3      & 0.4322124 & 0.4719338 \\
    0.4084846 & 0.4734619 & 4      & 0.4415893 & 0.4711338 \\
    0.4198448 & 0.4735378 & 5      & 0.4473202 & 0.4707490 \\
    0.4278257 & 0.4720958 & 6      & 0.4511786 & 0.4708328 \\
    0.4337174 & 0.4713210 & 7      & 0.4539517 & 0.4708704 \\
    0.4382332 & 0.4711443 & 8      & 0.4560403 & 0.4708630 \\
    0.4417979 & 0.4711707 & 9      & 0.4576699 & 0.4708578 \\
    0.4446794 & 0.4712065 & 10     & 0.4589767 & 0.4708575 \\
    0.4470546 & 0.4712177 & 11     & 0.4600480 & 0.4708575 \\

\hline
\hline 
\end{tabular}  
\label{tab:ConvergenceOneSided}
\end{center}
\end{table}
}
Empirically we observe that the rate of decay of the error of the un-extrapolated outcomes to be (approximately) linear for both different functionals, 
in line with the theoretical error bound 
given in Corollary~\ref{cor:error}: indeed, the ordinary least squares (OLS) regression lines (dark grey) in the log-log plots had slopes equal to $-0.94$ ($-0.98$) and $-0.98$ ($-0.99$) in the case of the first-passage probabilities (and expected occupation times) of the L\'{e}vy 
bridges corresponding to the HEJD model. 
Moreover, in line with the theoretical error estimates 
given in Theorem~\ref{lem:conv}, we observe that the application of the Richardson extrapolation leads to a significantly faster decay of the error.  By comparing the error plots of the expectations of the two path-dependent functionals 
we note that the logarithmic errors for the expected occupation times (for a given $n$) are consistently and significantly the smaller of the two, suggesting that the randomisation method converges faster in this case. This feature is likely to be related to the higher degree of smoothness  in the case of the expected occupation time. Finally, we mention that we computed the 
roots the Cram\'{e}r-Lundberg equation featuring in the solutions $D_{n,q}$ and $\Omega_{n,q}$ by deploying the Newton-Raphson method.\footnote{We investigated the round-off error resulting from the computation of the roots based on single precision arithmetic, and found that in that case the computed roots were accurate up to an error of $1.0e^{-11} $.}$\!\phantom{|}^,$\footnote{In order to efficiently approximate the first-passage time probability and the expected occupation time of the L\'{e}vy bridge process, one could combine the procedure described in 
this section with interpolation: 
One would then compute these quantities for a grid of points and construct subsequently functions on the real line $\mbb R$ by using 
(linear) interpolation.}

\section{Illustration: Option valuation using the bridge sampling method}\label{sec:option}

By way of illustration we next present the numerical results that were 
obtained by valuing an up-and-in barrier option and a range note under a number of models by using a Markov bridge algorithm described in Table~\ref{table:bridge} below 
(the recursive method for approximation of first-passage time probabilities and expected occupation times from Section~\ref{sec:NumericalResults} is applied).

We assume that the stock price process $S = \{S_t, t\in\mbb R_+\}$ evolves according to a Bates-type 
stochastic volatility model with mixed-exponential jumps.
The process $S$  is thus specified by 
the exponential model
$$S_t = \exp\{Y_t\}, \q t\in\mbb R_+,$$
where the log-price process $Y=\{Y_t, t\in\mbb R_+\}$ satisfies the stochastic differential equation 
\begin{eqnarray}
\td Y_t &=& \left( \mu-\frac{Z_t}{2} \right) \td t + \sqrt{|Z_t|} \td B_t +  \td J_t,\q Y_0 = x, \label{Eqn:Bates} \\
\td Z_t &=& \kappa ( \delta - Z_t) \td t + \xi \sqrt{|Z_t|}\td W_t, \ \ \ \ \  t \in\mbb R_+,\q  Z_0 = v, 
\end{eqnarray}
where $x$ and $v$ are strictly positive, $(B,W)$ is a two-dimensional Brownian motion with correlation-parameter $\rho$ and
$ J_t $ is an independent compound Poisson process with intensity $ \lambda $ and jump-sizes 
distributed according to a mixed-exponential distribution $F$ with mean $m$. The parameters $\kappa$, $\delta$, and $\xi$
of the model are positive and represent the speed of mean-reversion of the volatility, the long term volatility level and the volatility of volatility parameter. The parameter $\mu$ is set equal to 
$\mu=r-q - \lambda m$ which ensures that the moment condition 
$\E[\exp\{Y_t\}] = \exp\{(r-q)t+ Y_0\}$ is satisfied for all non-negative $t$, where the constants $r$ and $q$ 
are non-negative constants representing the risk-free rate of return and the dividend yield. 
Under this moment condition it holds that the process $\{\te{-(r-q)t}S_t, t\in\mbb R_+\}$ is a martingale.
Note that choosing $\kappa$ and $\xi$ equal to zero yields the mixed-exponential jump-diffusion process.  

By way of example we consider an up-and-in call (UIC) option and a range note (RN).  By arbitrage pricing theory, the UIC option and the 
RN have values at time $0$ given by
\begin{eqnarray*}
UIC(K,H) =  \E  \left[\te{-rT} (S_T-K)^+ I_{\{ \sup_{0 \leq t \leq T} S_t > H  \}} \right],\ \ 
RN(a_1,a_2) =  \E \left[\te{-rT}\cdot \frac{C}{T}\int_0^T I_{ \{ a_1 \leq S_u \leq a_2 \} } \td u \right], 
 \end{eqnarray*}
where $K$ is the strike price, $H$ is the barrier level, $C$ is 
the nominal, and $a_1$ and $a_2$ are the lower and upper 
bound of the range respectively. 

\subsection{Markov Bridge sampling method}\label{sec:bridge}

The first step is to approximate the log-price process $Y$ by a process that has piecewise constant drift and volatility 
deploying the Euler-Maruyama approximation of the process $(Y,Z)$ 
on the equidistant partition $\mbb T_N$ which can be expressed as 
\begin{eqnarray}
\label{eq:Yp1}
&& Y'_{\tau_{n+1}} = Y'_{\tau_{n}} + \le(\mu - \frac{Z'_{\tau_n}}{2}\ri)\Delta_n + 
 \sqrt{|Z'_{\tau_n}|}\Delta W_n + \Delta J_n,\q Y'_0 = x,\\
\label{eq:Zp1}
&& Z'_{\tau_{n+1}} = Z'_{\tau_{n}} + \kappa(\delta - Z'_{\tau_n})\Delta_n + 
 \xi\sqrt{|Z'_{\tau_n}|}\Delta B_n,\q Z_0' = v,
\end{eqnarray}
for $n\in\mbb N\backslash\{0\}$,
with $\Delta W_n = W_{\tau_{n+1}} - W_{\tau_{n}}$, 
$\Delta B_n = B_{\tau_{n+1}} - B_{\tau_{n}}$,
$\Delta J_n = J_{\tau_{n+1}} - J_{\tau_{n}}$, and $\Delta_n = \tau_{n+1}-\tau_n=T/N$. 
See \cite{HM,KlNe} for results 
on strong and weak-convergence of this scheme.
The Markov bridge-sampling method is based on the
continuous-time Euler-Maruyama approximation $Y'$ leaving the (piecewise constant) approximation 
$(Z'_{\tau_n})_{n\in\mbb N}$ for $Z$  given in \eqref{eq:Zp1} unchanged. 
We arrive at the approximation
\begin{eqnarray}\label{eq:Yp2}
&& Y'_{t} = Y'_{\tau_{n}} + \le(\mu - \frac{Z_{\tau_n}}{2}\ri)(t-\tau_n)  + 
 \sqrt{|Z'_{\tau_n}|}(W_t - W_{\tau_n}) +  (J_t - J_{\tau_n}),
\\
&& Z'_t = Z'_{\tau_n},
\label{eq:Zp2}
 \end{eqnarray}
for $t\in[\tau_n, \tau_{n+1}]$.  Observe that with this choice of interpolation it holds that,
conditional on the values of the random variable $Z'_{\tau_n}$, 
the process $\{Y'_{t-\tau_n}, t\in [\tau_n, \tau_{n+1}]\}$ is a L\'{e}vy process, for each $n=0, \ldots, N-1$. 
The bridge sampling algorithm is summarised in Table \ref{table:bridge}.

\begin{table}[h!]
\caption{Bridge sampling algorithm for approximating $\E[F(T,Y, Z)]$.}
\label{table:bridge}
\begin{center}
\begin{lstlisting}[mathescape,frame=single]
0. Fix $M, N\in\mbb N$ sufficiently large.
1. Sample $M$ IID copies $\xi^{(1)}, \ldots, \xi^{(M)}$ from the law of $\left(Y'_{\tau_1}, Z'_{\tau_1}, \ldots, Y'_{\tau_N}, Z'_{\tau_N}\right)$, 
2. Evaluate the estimator $\frac{1}{M}\sum_{i=1}^M \WT F^{(N)}\le(\xi^{(i)}\ri),$
with $ \WT F^{(N)}(y_0,z_0, \ldots, y_N, z_N) = \E\le[F(T, Y', Z')\bigg| 
Y'_{\tau_0} = y_0, Z'_{\tau_0} = z_0, \ldots, Y'_{\tau_N} = y_N, Z'_{\tau_N} = z_N\ri].$
\end{lstlisting}
\end{center}
\end{table}
\begin{rems}\label{rem2}\rm
The choice $N=1$ in the above algorithm 
corresponds to the case of a {\em single} large 
step bridge sampling, which is the version of the algorithm that 
was implemented to produce the results reported in 
Section~\ref{sec:NumericalResults}. 
\end{rems}

Next we focus on the application of the bridge sampling method
to the approximation of the expectation of two path-dependent functionals 
that are given in terms 
of the running maximum and the occupation time of $Y$ 
as follows:
\begin{eqnarray*}
&& F_S(T,Y, Z) := g(Y_T) I_{\le\{\ovl Y_T\leq a\ri\}}, \quad a>0,
\ \text{with}\ 
\overline{Y}_t:=\sup \{Y_s\>:\>s\leq t\},
\\
&& F_O(T,Y, Z) := \int_0^T g(Y_s)\td s, 
\end{eqnarray*}
for some function $g:\mbb R_+\to\mbb R$.
The functionals $F_S$ and $F_O$ admit the following 
multiplicative and additive decompositions 
into parts that only involve the processes $Y^{i-1,i} := \{Y_{t+\t_{i-1}}, t\in [0, \t_i-\t_{i-1}]\}$, 
for $i=1, \ldots, N$:
\begin{eqnarray*} 
&& F_S(T,Y,Z) = g(Y_T) \prod_{i=1}^N F^{(i)}_{S}(Y,Z),\qquad F^{(i)}_{S}(Y,Z ) = 
I_{\le\{\sup_{s\in[\t_{i-1}, \t_i]} Y_s \leq a\ri\}},\\
&& F_O(T,Y,Z) = \sum_{i=1}^N F_{O}^{(i)}(Y,Z), \qquad
F^{(i)}_{O}(Y,Z) = \int_{\t_{i-1}}^{\t_i} g(Y_s)\td s.
\end{eqnarray*}
These decompositions in turn imply that the conditional expectations 
\begin{eqnarray}
\label{eq:CE1}
&&\WT F^{(N)}_S(y_0,z_0, \ldots, y_N, z_N)  := 
\E\le[F_S(T, Y', Z')\bigg| 
Y'_{\tau_0} = y_0, Z'_{\tau_0} = z_0, \ldots, Y'_{\tau_N} = y_N, Z'_{\tau_N} = z_N\ri],\\
&&\WT F^{(N)}_O(y_0, z_0, \ldots, y_N, z_N) := \E\le[F_O(T, Y', Z')\bigg| Y'_{\tau_0} = y_0, Z'_{\tau_0} = z_0, \ldots, Y'_{\tau_N} = y_N, Z'_{\tau_N} = z_N\ri]
\label{eq:CE2}
\end{eqnarray}
can be expressed in terms of L\'{e}vy bridge processes, as shown next. 
\begin{Prop} 
For any $N\in\mbb N$ the following decompositions hold true:
\begin{eqnarray}
&& \WT F^{(N)}_S((y_0,z_0), \ldots, (y_N,z_N)) = g(y_N) \prod_{i=1}^N\WT F_{S}^{(i)}(y_{i-1},y_{i},z_{i-1}),\\
&& \WT F^{(N)}_O((y_0,z_0), \ldots, (y_N,z_N)) = \sum_{i=1}^N\WT F_{O}^{(i)}(y_{i-1},y_{i}, z_{i-1}),
\end{eqnarray}
where the functions $x\mapsto\WT F_{S}^{(i)}(x,y,z)$ and $x\mapsto\WT F_{O}^{(i)}(x,y,z)$
are given by
\begin{eqnarray*}
&& \WT F_{S}^{(i)}(x,y,z) = \E\le[I_{\le(\sup_{s\leq\Delta} L_s^{(0,x)\to (\Delta,y),i}\leq a\ri)}\ri],\q \WT F_{O}^{(i)}(x,y,z) = \E\le[\int_0^{\Delta}g\le(L^{(0,x)\to (\Delta,y),i}_s\ri) \td s\ri],
 \end{eqnarray*}
with $\Delta=T/N$, where $L^{(0,x)\to (\Delta,y),i}$ 
denotes the L\'{e}vy bridge process from $(0,x)$ to $(\Delta, y)$, with underlying 
L\'{e}vy process $L^{(i)}$ that is equal in law 
to $Y^{i-1,i}$ conditional on $Z_{\tau_{i-1}} = z$  and $Y_{\tau_i}=x$.
\end{Prop}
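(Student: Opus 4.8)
The plan is to condition on the skeleton $\Xi:=(Y'_{\tau_0},Z'_{\tau_0},\ldots,Y'_{\tau_N},Z'_{\tau_N})$ and to combine two structural features: the multiplicative (respectively additive) splitting of $F_S$ (respectively $F_O$) into the per-interval functionals $F^{(i)}_S$ and $F^{(i)}_O$ recorded just above, and the conditional independence, given $\Xi$, of the interpolated path-segments $\{Y'_t:t\in[\tau_{i-1},\tau_i]\}$, $i=1,\ldots,N$. First I would read off from \eqref{eq:Yp2}--\eqref{eq:Zp2} that on $[\tau_{i-1},\tau_i]$ the process $Y'$ is a function of $Y'_{\tau_{i-1}}$, of $Z'_{\tau_{i-1}}$, and of the increments of $(W,J)$ over that same interval, while $Z'$ is constant there and $Z'_{\tau_i}$ is obtained from $Z'_{\tau_{i-1}}$ and the increment of $B$ over the interval. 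Since $(B,W)$ is a two-dimensional Brownian motion and $J$ an independent compound Poisson process, the noise attached to distinct intervals consists of increments of $(B,W)$ and of $J$ over disjoint time-sets, and is therefore independent; equivalently, the grid-values $(Y'_{\tau_n},Z'_{\tau_n})_n$ form a Markov chain and the harness property recalled in the Introduction applies. Conditionally on $\Xi$ the $N$ segment-paths are thus mutually independent.

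Next I would substitute the decompositions into the conditional expectations \eqref{eq:CE1}--\eqref{eq:CE2}. For $F_S$ the leading factor $g(Y'_T)=g(y_N)$ is $\Xi$-measurable and each $F^{(i)}_S$ depends on the $i$th segment only, so the conditional independence just established lets the conditional expectation of the product split into the product of conditional expectations, giving $\WT F^{(N)}_S=g(y_N)\prod_{i=1}^N\E[F^{(i)}_S\mid\Xi]$. For $F_O$ only linearity of the conditional expectation is needed, applied term by term to the sum, which yields $\WT F^{(N)}_O=\sum_{i=1}^N\E[F^{(i)}_O\mid\Xi]$.

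It then remains to identify each per-segment conditional expectation with a L\'{e}vy-bridge functional. Conditionally on $Z'_{\tau_{i-1}}=z_{i-1}$, the time-shifted segment $\{Y'_{\tau_{i-1}+s}-Y'_{\tau_{i-1}}:s\in[0,\Delta]\}$ is, by the observation following \eqref{eq:Zp2}, a L\'{e}vy process $L^{(i)}$ with the law stated in the proposition; conditioning in addition on its two endpoints $Y'_{\tau_{i-1}}=y_{i-1}$ and $Y'_{\tau_i}=y_i$ turns it, by the definition recalled in Appendix~\ref{sec:BridgeSampling}, into the L\'{e}vy bridge $L^{(0,y_{i-1})\to(\Delta,y_i),i}$. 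Rewriting $F^{(i)}_S$ and $F^{(i)}_O$ through this bridge reproduces exactly $\WT F^{(i)}_S(y_{i-1},y_i,z_{i-1})$ and $\WT F^{(i)}_O(y_{i-1},y_i,z_{i-1})$, and the two claimed identities follow.

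The step I expect to be the main obstacle is this last identification---specifically, showing that, after conditioning on the whole of $\Xi$, the law of the $i$th segment reduces to the bridge governed by $(y_{i-1},y_i,z_{i-1})$ alone and carries no residual dependence on the adjacent volatility value $z_i$. The delicate point is that $Z'_{\tau_i}$ is built from the Brownian increment over the very interval $[\tau_{i-1},\tau_i]$ that also drives $Y'$ there, so conditioning on $z_i$ amounts to conditioning on that increment. I would dispose of this by decomposing the segment's Brownian path into its terminal increment plus the associated Brownian bridge, noting that the bridge component is independent of both Brownian endpoint increments and of $J$, and then tracking how the correlation between the two Brownian drivers feeds through the endpoint constraint $Y'_{\tau_i}=y_i$; verifying that what survives depends only on $(y_{i-1},y_i,z_{i-1})$ is where the careful bookkeeping lies.
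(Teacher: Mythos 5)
Your first two steps are correct, and they are exactly what the paper's one-sentence proof compresses into the words ``harness property'': given the skeleton $\Xi$, each transition $(y_{i-1},z_{i-1})\to(y_i,z_i)$ constrains only the noise increments of the $i$-th interval, those increments are independent across intervals, so the segments are conditionally independent and the conditional expectation of the product (resp.\ the sum) factorises (resp.\ splits termwise). Up to that point your route and the paper's coincide. The genuine gap is in the last step, the one you yourself flag as the main obstacle, and it is not a matter of bookkeeping: carried out, your own decomposition shows that the residual dependence on $z_i$ does \emph{not} disappear when the model has both jumps and correlation. Work on the $i$-th interval with $z=z_{i-1}$, $\Delta=T/N$, and write the driver of $Y'$ as $W=\rho B+\sqrt{1-\rho^{2}}\,W^{\perp}$ with $W^{\perp}$ independent of $B$ and of $J$. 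Conditioning on $Z'_{\tau_i}=z_i$ pins $\Delta B_{i-1}=b$, an affine function of $z_i$; given this, the $B$-component of the segment becomes a Brownian bridge plus the ramp $\rho b s/\Delta$, and the endpoint constraint $Y'_{\tau_i}=y_i$ becomes
\begin{equation*}
\sqrt{(1-\rho^{2})|z|}\;\Delta W^{\perp}_{i-1}+\Delta J_{i-1}
=c-\sqrt{|z|}\,\rho\,b,
\qquad
c:=y_i-y_{i-1}-(\mu-z/2)\Delta .
\end{equation*}
Under the L\'{e}vy-bridge law of $L^{(i)}$ the split of the increment $c$ between Gaussian part and jump part is weighted by a centred Gaussian of variance $|z|\Delta$: the conditional law of the jump contribution is proportional to $\nu(\td j)\,\phi_{|z|\Delta}(c-j)$, where $\nu$ denotes the law of $J_{\Delta}$ and $\phi_{v}$ the centred normal density with variance $v$. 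Under the true conditional law, by the display above, it is proportional to $\nu(\td j)\,\phi_{(1-\rho^{2})|z|\Delta}(c-\sqrt{|z|}\rho b-j)$. For $\rho\neq0$ and $\lambda>0$ these measures differ---compare the relative mass of the atom at $j=0$, i.e.\ the conditional probability that the segment contains no jump at all---and they depend on $b$, hence on $z_i$. So the conditional law of the $i$-th segment given $\Xi$ is \emph{not} the L\'{e}vy bridge of $L^{(i)}$, and $\E[F^{(i)}_S\mid\Xi]$ cannot be written as a function of $(y_{i-1},y_i,z_{i-1})$ alone.

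Two conclusions follow. First, your argument is exact precisely when $\lambda=0$ (Heston: the endpoint constraint then pins $\Delta W^{\perp}_{i-1}$ completely, the segment is the Brownian bridge to $y_i$, independent of $b$, so your Gaussian reasoning does close) or when $\rho=0$ (so that $b$ is irrelevant); in the Bates-type case with $\rho\neq0$ and jumps---which is the case the paper actually computes, with $\rho=-0.5$ and Kou/HEJD jumps---the identification fails. Second, you should know that this defect is inherited from the paper itself: its proof of the proposition is a single sentence invoking the harness property, the definition of a L\'{e}vy bridge and temporal homogeneity, and it passes silently over exactly the point you isolated. A correct general statement must either allow $\WT F^{(i)}_{S}$ and $\WT F^{(i)}_{O}$ to depend on $z_i$ as well, with the L\'{e}vy bridge replaced by the $b$-tilted object above (note that even at $b=0$ the Gaussian weight has variance $(1-\rho^{2})|z|\Delta$ rather than $|z|\Delta$), or restrict to $\rho=0$ or jump-free models, or quantify the resulting bias, which vanishes only as $\Delta\to0$. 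Your proposal deserves credit for locating the obstacle that the paper ignores, but as written it asserts rather than proves the crucial identification, and the assertion is false in the stated generality.
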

\begin{proof}The decompositions hold true as a consequence of 
the harness property of a L\'{e}vy process, the definition of a L\'{e}vy bridge 
and the fact that a L\'{e}vy process is temporally homogeneous. 
\end{proof}

\subsection{Bates-type stochastic volatility model with jumps}
By approximating the log-price process $Y$ of the Bates-type model 
by the EM scheme in \eqref{eq:Yp1}--\eqref{eq:Zp2}, 
and computing first-passage time probabilities and expected occupation times of the process $Y'$ as before using the recursive algorithm 
(as in Section~\ref{sec:NumericalResults}), we obtained the approximate values of an up-and-in call option and a range note under the Heston model and Bates-type models with double-exponential and hyper exponential jumps. We ran 
the algorithm in Table~\ref{table:bridge} with 
10 million paths ($M=10^7$) on a uniform grid $\Upsilon$ with $N=2^i$ steps for  $i=0,1,...,10$. We used the recursions with $n=7$ steps and approximated the functions $\WH F_{S}^{(i)}(x,y,z)$ by evaluating these on a grid of points and using (tri-linear) interpolation to obtain approximations of the values of the function outside the grid.
By way of comparison, we also report the results obtained by a standard (discrete-time) Euler-Maruyama approximation with 10 million paths and a varying number of (equidistant) time-steps.

\begin{table}[t!]
\begin{center}
\caption{\footnotesize Model parameters of the generalised Bates model, 
the maturity, strike, barrier and spot levels and range of 
the up-and-in call option and range note to be used in 
Figure~\ref{fig:ConvergenceSV} and Table~\ref{tab:MCbarrierSV}
(with jump-parameters as given in Table~\ref{tab:ModelParameters}).}
\label{tab:ModelParametersSV}
\small{
\begin{tabular}{lllllllcllll}
\hline
$\kappa$ & $\delta$ & $\xi$ & $\rho$ & $V_0$ & $K$ & $H$ & 
$(a_1, a_2)$ & $S_0$ & $r$ & $d$ & $T$  \\
\hline
1.0 &   0.1 &   0.2 &   -0.5 & 0.07 & 100 & 120 
& (1.15,1.35)
& 100 & 0.05 & 0.0 & 1.0 \\
\hline
\end{tabular}
}
\end{center}
\end{table}

For the results displayed in Figure \ref{fig:ConvergenceSV} we take the value corresponding to $N=1024$ as true value and compute the logarithm of the absolute errors for all other outcomes with respect to this value. 
In order to estimate the rates of decay of the error we added 
ordinary least-square regression lines to the figures.
The slopes of the OLS lines for the Heston model and the Bates-type model with double-exponential and hyper-exponential jumps that we found are $-1.03$, $-1.02$, and $-1.04$ in the case of the up-and-in call option and $-1.36$, $-0.96$ and $-1.02$,  in the case of the range note, 
which suggests a rate of decay of the error that is linear in the reciprocal of the number of steps. 

By way of comparison we also implemented the standard (discrete-time) 
Euler-Maruyama scheme for each of the three models, and found 
the corresponding three slopes of the OLS lines to be 
equal to $-0.48$ in the case of the values of 
the up-and-in call options  and to $-1.00$ in the case of values of the range notes. These results suggest that, in the case of an UIC option, 
only a square-root rate holds for the decay of the error 
as function of the reciprocal of the 
number of time-steps rather than a linear rate, which is in line 
with the well-known fact that the strong order of 
the discrete-time EM scheme is $0.5$, and that, furthermore, for 
killed diffusion models  the weak error of the discrete-time EM scheme 
has been shown to be bounded by a constant times $N^{-1/2}$
in the number of time-steps $N$ under suitable regularity assumptions 
on the coefficients and the pay-off function 
(see \cite[Thms. 2.3, 2.4]{Gobet}). 

\begin{figure}[tp!]
\centering
\mbox{
\subfigure[Up-and-in call option]{\includegraphics[scale=0.35]{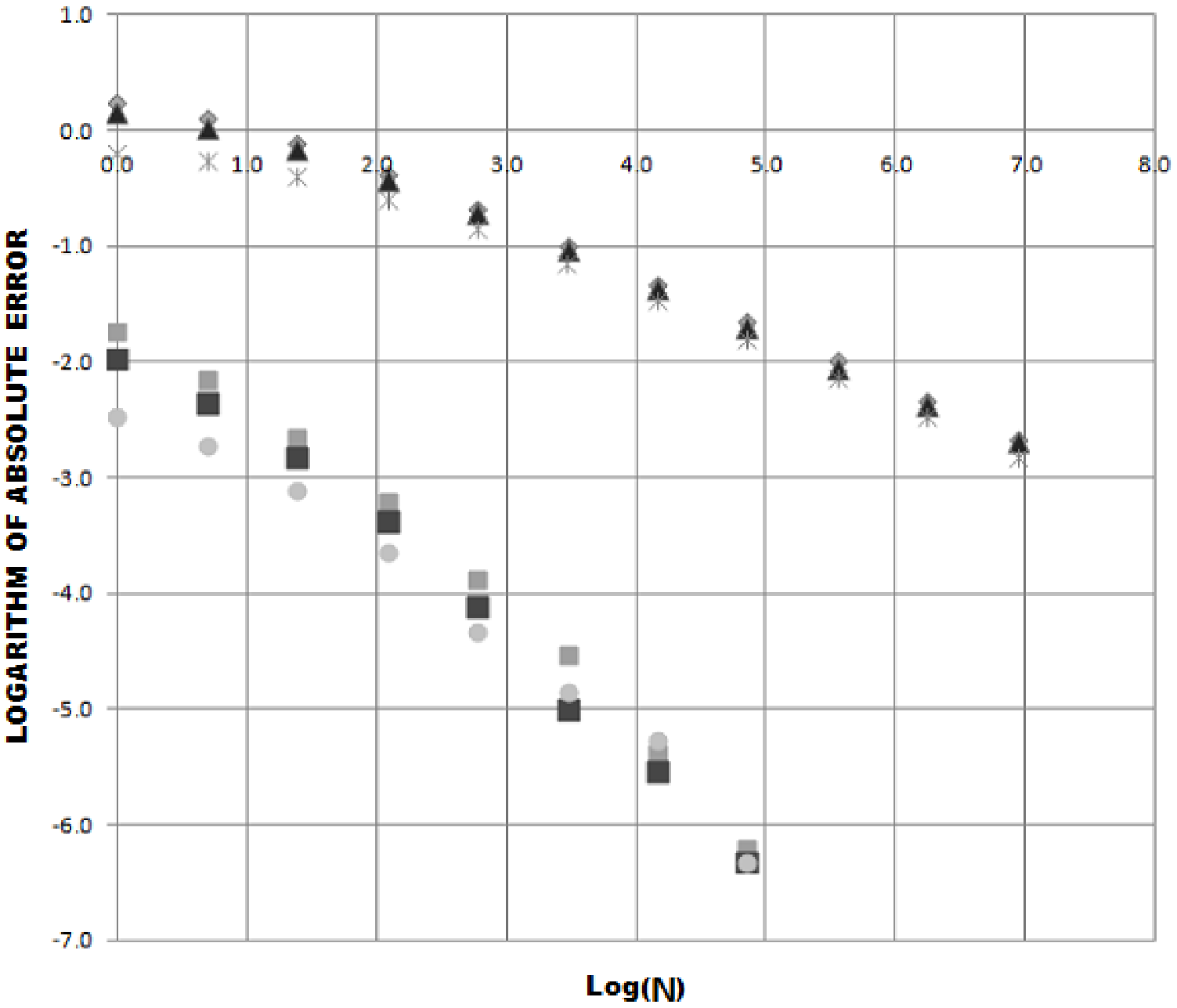}}
\subfigure[Range note]{\includegraphics[scale=0.35]{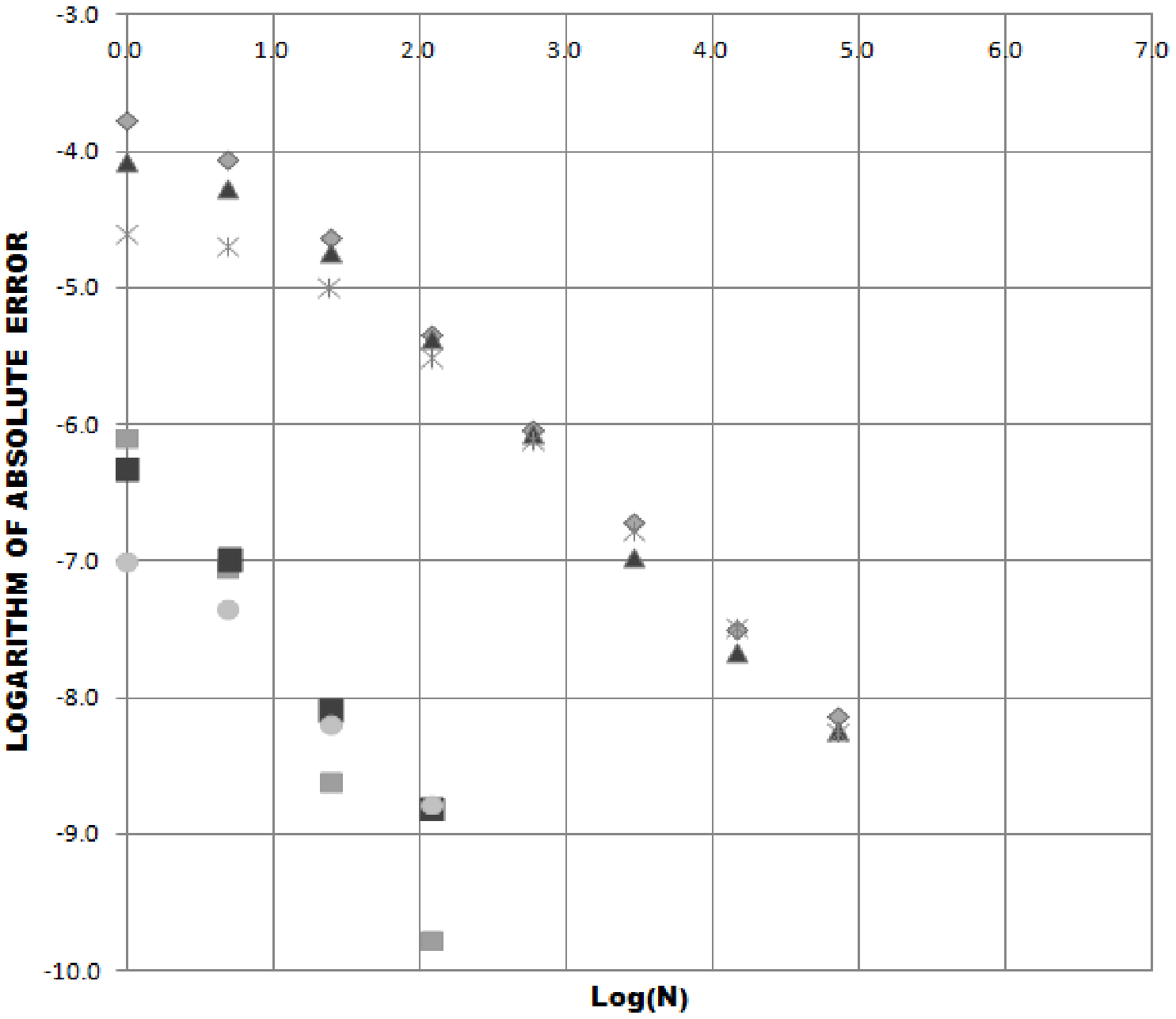}}
}
\includegraphics[scale=0.7]{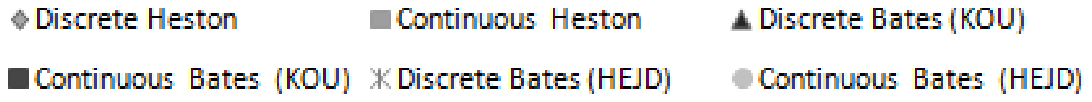}
\caption[Convergence of discrete-time and continuous-time Euler scheme]{\footnotesize The absolute error of the values of 
an up-and-in barrier option and range note under the Heston and Bates-type models plotted on a log-log scale against the number of time-steps $N$.
Parameters are as given in Tables~\ref{tab:ModelParameters} and~\ref{tab:ModelParametersSV}.}
\label{fig:ConvergenceSV}
\end{figure}
\begin{table}[ht!]
\begin{center}
\caption[Monte Carlo methods for barrier option in stochastic volatility model]{\footnotesize A comparison of different Monte Carlo methods (all ran with antithetic variates and $1$ million paths) for (i) an up-and-in call option 
and (ii) a range note. The stochastic volatility parameters and parameters of 
the derivative contracts are as given in Table~\ref{tab:ModelParametersSV}, and the jump parameters as given in Table~\ref{tab:ModelParameters}. 
In the column 'Time' the run times are reported in seconds (which include in particular the time to find the roots of the Cram\'{e}r-Lundberg equation). The continuous-time EM schemes were run using the 
first-passage time probabilities of the corresponding randomised 
bridge processes computed using $n=7$ 
recursive steps (for the barrier option) and using the expected 
occupation time 
of the corresponding randomised bridge process computed using $n=5$ 
recursive steps (for the range note). 
To obtain the values in the table marked with $^{\dag}$ and $^{*}$ we used the exact Brownian bridge probability and numerical integration,
respectively.  
}
\label{tab:MCbarrierSV}
\footnotesize{
\bgroup
\def\arraystretch{1}
\def\tabcolsep{1.7mm}
\begin{tabular}{lrrrrrrr}
\hline
&                 & \multicolumn{2}{c}{\textbf{Heston}} & \multicolumn{2}{c}{\textbf{Bates (Kou)}} & \multicolumn{2}{c}{\textbf{Bates (HEJD)}} \\
    \hline
    & Steps & Midpoint (Error) &  Time& Midpoint (Error) &  Time & Midpoint (Error) &  Time \\
\hline
\begin{tabular}{r}
Barrier option\\
\hline
\end{tabular}
\\
    Discrete-time EM & 100   &  12.755 ($\pm$0.0389) & 7.9   &  13.333 ($\pm$0.0407) & 9.1   &  15.358 ($\pm$0.0483) & 9.8 \\
    Discrete-time EM& 1,000 &  12.866 ($\pm$0.0388) & 80  &  13.432 ($\pm$0.0406) & 88  &  15.387 ($\pm$0.0481) & 94 \\
   Discrete-time EM& 10,000 &  12.935 ($\pm$0.0387) & 789 &  13.467 ($\pm$0.0406) & 888 &  15.413 ($\pm$0.0482) & 958 \\
    Continuous-time EM& 100   &  12.948 ($\pm$0.0387) & 18  &  13.468 ($\pm$0.0406) & 20  &  15.457 ($\pm$0.0482) & 82 \\
    Continuous-time EM& 1,000 &  12.956 ($\pm$0.0388) & 163 &  13.534 ($\pm$0.0408) & 165 &  15.478 ($\pm$0.0482) & 233 \\
    Continuous-time EM$^{\dag}$ & 1,000 &  12.951 ($\pm$0.0388) & 125 &       &       &       &  \\
  \hline
\begin{tabular}{r} 
Range note\phantom{ion}\\
\hline
\end{tabular}\\
    Discrete-time EM& 100   & 15.352 ($\pm$0.0373) & 8.4   & 15.374 ($\pm$0.0367) & 9.1   & 15.387 ($\pm$0.0354) & 10 \\
    Discrete-time EM& 1,000 & 15.288 ($\pm$0.0371) & 81  & 15.315 ($\pm$0.0365) & 93  & 15.309 ($\pm$0.0352) & 98 \\
    Discrete-time EM& 10,000 & 15.288 ($\pm$0.0371) & 793 & 15.304 ($\pm$0.0365) & 928 & 15.286 ($\pm$0.0351) & 1079 \\
    Continuous-time EM& 10    & 15.177 ($\pm$0.0367) & 54  & 15.237 ($\pm$0.0362) & 68  & 15.255 ($\pm$-0.035) & 132 \\
    Continuous-time EM& 100   & 15.288 ($\pm$0.0371) & 114 & 15.294 ($\pm$0.0365) & 126 & 15.327 ($\pm$0.0352) & 364 \\
    Continuous-time EM$^{*}$ & 100   & 15.288 ($\pm$0.0371) & 1491 &       &       &       &  \\
\hline
\hline
\end{tabular} 
\egroup 
}
\end{center}
\end{table}
\medskip
\appendix
\section{Proof of recursions for maxima and occupation times of a L\'{e}vy bridge}
\label{sec:BridgeSampling}
Let $X=\{X_t, t\in\mbb R_+\}$ be a L\'{e}vy process (a stochastic process with stationary and independent increments and right-continuous paths with left limits such that $X_0=0$) 
that is defined on some filtered probability space $(\Omega,\mathcal{F}, \mbf F, \P)$, 
where $\mbf F =  \{ \mathcal{F}_t, t \in\mbb R_+ \}$ denotes the completed right-continuous filtration generated by $X$. 
We refer to \cite{Kyprianou,Sato1999} for general treatments of the theory of 
L\'{e}vy processes.  To avoid degeneracies we exclude  in the sequel
the case that $|X|$ is a subordinator. 
The bridge method under consideration involves randomised bridge processes
that can informally be described as processes that are equal in law to $X$ conditioned 
to take a given value at certain independent random times. 

Formally, such a process can be constructed 
by invoking general results on existence of conditional distributions and disintegration 
(see Kallenberg~\cite[Thms. 6.3, 6.4]{Kallenberg}). 
More specifically, let the triplet $(X,\tau_1,\tau_2)$ of the L\'{e}vy process $X$ 
and independent random times $\tau_1, \tau_2$ with $\tau_1\leq\tau_2$
be defined on the Borel space $D\times U$ that is the product of
the Skorokhod space $D$ of rcll functions 
and the space $U = \mbb R^2_+$.
Then, by disintegration, we obtain a family of conditional laws conditional on different values of
 $(\eta_1,\eta_2):=(X_{\tau_1}, X_{\tau_2})$ that may be used to define the randomised bridge process with starting point 
$(\tau_1, y_1)$ and end point $(\tau_2, y_2)$ by $\{X_{(s+\tau_1)\wedge\tau_2}, s\in\mbb R_+\}$ 
for almost  all realisations $(y_1,y_2)$ of $(\eta_1,\eta_2)$. 

Under regularity assumptions on the L\'{e}vy process $X$ and for specific choices of the random times the construction in the previous paragraph may be extended to all realisations of $(\eta_1,\eta_2)$, drawing on results in \cite{ChaumontUribe} where weak-continuity results and pathwise constructions of a Markov bridges have been recently provided 
(see also~\cite{uribebravo2014} for the case of L\'{e}vy processes conditioned to stay positive).  

\begin{As}\label{as:smooth}\rm
The L\'{e}vy process $X$ satisfies the integrability condition
\begin{equation}\label{eq:Psint}
\int_{\mbb R\backslash(-1,1)} \frac{\td \theta}{|\Psi(\theta)|} < \infty,
\end{equation}
where $\Psi$ is the characteristic exponent of $X$, 
which is the function $\Psi:\mbb R\to\mbb C$ that satisfies the identity
$\E[\exp(\mbf i\theta X_t)] = \exp(-t\Psi(\theta))$ 
for all $\theta\in\mbb R$ and $t\in\mbb R_+$.
\end{As}

As random times  
we consider Gamma random variables $\Gamma_{n,q}$, $n\in\mbb N, q>0$, with mean $n/q$ 
and variance $n/q^2$ that are independent of $X$. We suppose that the pair $(X,\Gamma_{n,q})$ 
is defined on the product space $(\Omega\times\mbb R_+, \mc F\otimes\mc B(\mbb R_+), \mbf P\times P)$. 
To simplify notation we use in the sequel $\PP$ to denote the product-measure $\P\times P$.
It follows from Sato \cite[Prop. 28.1]{Sato1999} that under Assumption~\ref{as:smooth} 
the distributions under $\PP$ of both $X_{\Gamma_{n,q}}$ 
and $X_t$, $t>0$, admit continuous densities:

\begin{Lemma}
Let Assumption~\ref{as:smooth} hold. {\rm (i)} Then for any $q>0$ and $n\in\mbb N$ 
the random variable $X_{\Gamma_{n,q}}$ has a density 
$u_{n,q}$ that is continuous and bounded.

{\rm (ii)} For any $t>0$, $X_t$ admits a bounded density $p(t,x)$ that is continuous in $(t,x)\in (0,\infty)\times\mbb R$.
\end{Lemma}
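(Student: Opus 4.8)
The plan is to prove both parts by Fourier inversion, using that the relevant characteristic functions are explicit. Conditioning on the independent random time $\Gamma_{n,q}$ and combining $\E[\te{\mbf i\theta X_t}]=\te{-t\Psi(\theta)}$ with the Laplace transform $\E[\te{-s\Gamma_{n,q}}]=(q/(q+s))^n$ (valid for $\Re s\ge 0$, which applies since $\Re\Psi\ge0$), the two characteristic functions are
\[
\phi_{n,q}(\theta):=\E\!\le[\te{\mbf i\theta X_{\Gamma_{n,q}}}\ri]=\le(\frac{q}{q+\Psi(\theta)}\ri)^{\!n},\qquad \phi_t(\theta):=\E\!\le[\te{\mbf i\theta X_t}\ri]=\te{-t\Psi(\theta)}.
\]
Once I establish that $\phi_{n,q}$ and $\phi_t$ lie in $L^1(\mbb R)$, the inversion formula $u_{n,q}(x)=\frac1{2\pi}\int_{\mbb R}\te{-\mbf i\theta x}\phi_{n,q}(\theta)\td\theta$ (and the analogous one for $p(t,\cdot)$) defines the densities, and both boundedness and continuity then follow from dominated convergence with an $x$-independent integrable majorant. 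Thus everything reduces to two integrability statements.

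For part~(i) the Gamma randomisation does the essential work, and Assumption~\ref{as:smooth} may be used directly in terms of $|\Psi|$. Since $\Re\Psi\ge0$ we have $\Re(q+\Psi(\theta))\ge q>0$, so $|q+\Psi(\theta)|\ge q$ for all $\theta$ and hence $|\phi_{n,q}|\leq 1$. For the tails I would split $\{|\theta|>1\}$ according to whether $|\Psi(\theta)|\ge 2q$ or $|\Psi(\theta)|<2q$: on the former $|q+\Psi|\ge\tfrac12|\Psi|$, giving $|\phi_{n,q}(\theta)|\leq(2q/|\Psi(\theta)|)^{n}\leq 2q/|\Psi(\theta)|$ for $n\ge1$, which is integrable by~\eqref{eq:Psint}; on the latter the integrand is $\leq 1$, and the set itself has finite Lebesgue measure, since $|\Psi|^{-1}>\tfrac1{2q}$ there and $\int_{|\theta|>1}|\Psi|^{-1}\td\theta<\infty$ by~\eqref{eq:Psint}. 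Hence $\phi_{n,q}\in L^1(\mbb R)$, and inversion yields the bounded continuous density $u_{n,q}$.

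Part~(ii) is where the real difficulty lies, because without the Gamma smoothing the modulus $|\phi_t(\theta)|=\te{-t\Re\Psi(\theta)}$ is governed by $\Re\Psi$, whereas Assumption~\ref{as:smooth} only controls $|\Psi|$; and as $|\Psi|^{-1}\leq(\Re\Psi)^{-1}$, the condition~\eqref{eq:Psint} does \emph{not} bound $\int\te{-t\Re\Psi}\td\theta$ directly. The main obstacle is therefore to pass from control of $|\Psi|$ to control of $\Re\Psi$. I would resolve this by combining~\eqref{eq:Psint} with the standard L\'evy--Khintchine estimate $|\Im\Psi(\theta)|\leq c(1+|\theta|)(1+\Re\Psi(\theta))^{1/2}$, obtained by controlling the drift and small-jump contributions to $\Im\Psi$ by $\Re\Psi(\theta)=\tfrac12\sigma^2\theta^2+\int(1-\cos\theta x)\,\nu(\td x)$ via the Cauchy--Schwarz inequality. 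This forces any superlinear growth of $|\Psi|$ to reside in $\Re\Psi$: if $\Re\Psi(\theta)/\log|\theta|$ stayed bounded on a family of intervals tending to infinity, the estimate would give $|\Psi(\theta)|=O(|\theta|(\log|\theta|)^{1/2})$ there, whence $\int|\Psi|^{-1}\td\theta$ would diverge, contradicting~\eqref{eq:Psint}. Consequently $\Re\Psi(\theta)/\log|\theta|\to\infty$ off a set of finite measure, which is precisely the hypothesis of Sato~\cite[Prop.~28.1]{Sato1999}.

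Given this growth of $\Re\Psi$, for each fixed $t>0$ one has $\te{-t\Re\Psi(\theta)}\leq|\theta|^{-2}$ for all large $|\theta|$, so $\phi_t\in L^1(\mbb R)$ and inversion defines the bounded density $p(t,x)=\frac1{2\pi}\int_{\mbb R}\te{-\mbf i\theta x}\te{-t\Psi(\theta)}\td\theta$. For the joint continuity in $(t,x)$ on $(0,\infty)\times\mbb R$ I would fix a compact interval $[t_0,t_1]\subset(0,\infty)$ and dominate, uniformly in $t\in[t_0,t_1]$, by $|\te{-\mbf i\theta x}\te{-t\Psi(\theta)}|=\te{-t\Re\Psi(\theta)}\leq\te{-t_0\Re\Psi(\theta)}\in L^1(\mbb R)$; as the integrand is jointly continuous in $(t,x)$, dominated convergence gives continuity of $p$ on $[t_0,t_1]\times\mbb R$, and letting $t_0\downarrow0$ and $t_1\uparrow\infty$ yields continuity on $(0,\infty)\times\mbb R$. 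The only genuinely delicate point is the estimate quantifying $\Re\Psi$ in terms of $|\Psi|$; the remainder is routine Fourier analysis.
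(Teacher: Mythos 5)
Your part~(i) is correct, and it is in essence the argument the paper intends: the paper offers no written proof beyond the citation of Sato~\cite[Prop.~28.1]{Sato1999}, which is the Fourier-inversion criterion (an integrable characteristic function yields a bounded continuous density), so the entire content of~(i) is the integrability of $(q/(q+\Psi))^n$, which your splitting of $\{|\theta|>1\}$ into $\{|\Psi|\ge 2q\}$ and $\{|\Psi|<2q\}$ (the latter of finite measure by Chebyshev and \eqref{eq:Psint}) establishes correctly.

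Part~(ii), however, contains a genuine gap: the ``standard L\'evy--Khintchine estimate'' $|\Im\Psi(\theta)|\le c\,(1+|\theta|)\,(1+\Re\Psi(\theta))^{1/2}$, on which your whole argument rests, is false. Counterexample: let $\theta_k=2^{2^{k+1}}$ (so $\theta_{k+1}=\theta_k^2$) and consider the driftless pure-jump L\'evy process with L\'evy measure $\nu=\sum_{k\ge1}\theta_k\,\delta_{x_k}$, where $x_k=2\pi/\theta_k\in(0,1)$; this is a L\'evy measure because $\sum_k\theta_k x_k^2=4\pi^2\sum_k\theta_k^{-1}<\infty$. Then $\Re\Psi(\theta)=\sum_k\theta_k(1-\cos\theta x_k)$ and $\Im\Psi(\theta)=\sum_k\theta_k(\theta x_k-\sin\theta x_k)$. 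At $\theta=\theta_n$: for $k\le n$ the ratio $\theta_n/\theta_k$ is a positive integer, so these atoms contribute $0$ to $\Re\Psi(\theta_n)$ and $\theta_k\cdot 2\pi\theta_n/\theta_k=2\pi\theta_n$ \emph{each} to $\Im\Psi(\theta_n)$; for $k>n$ one has $\theta_n x_k\le 2\pi/\theta_n$, so their total contribution to $\Re\Psi(\theta_n)$ is at most $2\pi^2\theta_n^2\sum_{k>n}\theta_k^{-1}\le 4\pi^2$ (and their contribution to $\Im\Psi(\theta_n)$ is nonnegative). Hence $\Re\Psi(\theta_n)\le 4\pi^2$ while $\Im\Psi(\theta_n)\ge 2\pi n\,\theta_n$, and your inequality would force $n\le C$ for all $n$. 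What Cauchy--Schwarz together with the truncation bound $\nu(\{|x|>1/a\})\le C a^{-1}\int_0^a\Re\Psi(s)\,\mathrm{d}s$ actually gives is the weaker estimate with $\sup_{|s|\le|\theta|}\Re\Psi(s)$ in place of $\Re\Psi(\theta)$, and that version cannot feed your contradiction argument, precisely because $\Re\Psi$ may collapse at resonant frequencies while its running supremum is enormous. Two further inaccuracies compound this: even granting your estimate, boundedness of $\Re\Psi/\log|\theta|$ ``on a family of intervals tending to infinity'' does not by itself make $\int|\Psi|^{-1}\mathrm{d}\theta$ diverge (the intervals must carry infinite mass of $\mathrm{d}\theta/(|\theta|\sqrt{\log|\theta|})$); and Sato's Prop.~28.1 is the inversion criterion, not a result whose hypothesis is the growth condition you quote. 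So the implication you actually need --- that Assumption~\ref{as:smooth} forces $\mathrm{e}^{-t\Re\Psi}\in L^1(\mbb R)$ for every $t>0$ --- remains unproven in your proposal. It is immediate when $\sigma>0$ (then $\Re\Psi(\theta)\ge\sigma^2\theta^2/2$, which covers the MEJD processes the paper actually uses) and easy for symmetric processes (then $\Psi=\Re\Psi\ge 0$ and $\mathrm{e}^{-t\Psi}\le C_t/(1+\Psi)$), but for general asymmetric $\Psi$ it is exactly the hard point, and bridging it requires a genuinely different argument from the one you give.
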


Under Assumption~\ref{as:smooth} one may define the randomised L\'{e}vy bridge process starting at $(0,x)$ and pinned down at $(\Gamma_{n,q},y)$ for any $x,y\in\mbb R$. We recall first from \cite[Theorem~1]{ChaumontUribe} that, under Assumption~\ref{as:smooth}
and for any $t>0$ and $x,y\in\mbb R$ such that $p(t,y-x)>0$, there exists a Markov process on the probability space $(\Omega,\mc F,\P)$, 
denoted by $X^{(0,x)\to (t,y)}=\{X^{(0,x)\to (t,y)}_u, u\in[0,t]\}$, that starts at time $0$ at $x$ a.s., is equal to $y$ at time $t$ a.s., and 
satisfies the disintegration property. The process $X^{(0,x)\to (t,y)}=\{X^{(0,x)\to (t,y)}_u, u\in[0,t]\}$ is referred 
to as the {\em L\'{e}vy bridge process} from $(0,0)$ to $(t,y)$. 

We next specify the definition a L\'{e}vy bridge process pinned down at a Gamma random time and a given fixed end point. 
For any pair $x,y\in\mbb R$ with $u_{n,q}(y-x)>0$, the {\em randomised L\'{e}vy bridge process} 
$X^{(0,x)\to (\Gamma_{n,q},y)} = \{X^{(0,x)\to (\Gamma_{n,q},y)}_{t}, t\in\mbb R_+\}$ starting from $(0,x)$ and pinned down at $(\Gamma_{n,q},y)$ is the stochastic process with sample paths $t\mapsto\le. X^{(0,x)\to (s,y)}_{t\wedge s}(\w)\ri|_{s = \Gamma_{n,q}(\gamma)}$ for given realisations $(\w,\gamma)$
in the sample space $\Omega\times\mbb R_+$. 
The process $X^{(0,x)\to (\Gamma_{n,q},y)}$ satisfies the disintegration property (which can be shown by 
a similar line of reasoning as was given in the proof of  \cite[Theorem~1]{ChaumontUribe}), and is hence equal in law 
to the corresponding process obtained by the construction described in the second paragraph of this section. 
The derivation of the expressions for the functions $\vec D^{(1)}_q(x,y)$ and $\vec\Omega^{(1)}_q(x,y)$ is based in part 
on the following auxiliary result 
concerning the differentiability of two related functions under Assumption~\ref{as:smooth} (the proof of which is omitted as it follows by standard arguments).

\begin{Lemma}\label{lem:density} Let  Assumption~\ref{as:smooth}  hold and let $q$ be any strictly positive number.

(i) For any fixed $x\in\mbb R_+$, the function  
$
y\mapsto\PP(\overline{X}_{\Gamma_{1,q}}\leq x, X_{\Gamma_{1,q}}\leq y)$
is continuously differentiable on $\mbb R$ and its derivative $y\mapsto D_{1,q}(x,y)$ 
is bounded.

(ii) The map 
$(x,y)\mapsto\E\le[\int_0^{\Gamma_{1,q}}I_{\{X_u\leq x\}}\td u\, I_{\{ X_{\Gamma_{1,q}}\leq y\}}\ri]$ 
is continuously differentiable with respect to $x$ and~$y$ in $\mbb R$. 
The mixed derivative 
with respect to $x$ and $y$ is  given by $\Omega_{1,q}(x,y)$
for $x,y\in\mbb R$.
\end{Lemma}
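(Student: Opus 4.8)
The plan is to reduce each part to the differentiation of an explicit integral representation, the two representations being supplied by the Wiener--Hopf factorisation of $X$ at the exponential time $\Gamma_{1,q}$ for part~(i), and by the lack-of-memory property of $\Gamma_{1,q}$ together with the resolvent density for part~(ii). Throughout, the enabling analytic input is Assumption~\ref{as:smooth}, which guarantees (by the preceding lemma) that $X_{\Gamma_{1,q}}$ has a continuous bounded density $u_{1,q}$, and which moreover yields densities $\ovl u_{1,q}$ and $\unl u_{1,q}$ on $\mbb R_+$ for $\ovl X_{\Gamma_{1,q}}$ and for the reflected infimum $-\unl X_{\Gamma_{1,q}}$; in the MEJD setting these are the mixed-exponential densities of Lemma~\ref{lem:XGq}.

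For part~(i) I would start from the Wiener--Hopf identity $\PP[\ovl X_{\Gamma_{1,q}}\in\td s,\ s-X_{\Gamma_{1,q}}\in\td z]=\ovl u_{1,q}(s)\,\unl u_{1,q}(z)\,\td s\,\td z$, $s,z\in\mbb R_+$ (the same identity invoked in the proof of Proposition~\ref{prop:DOU}), which expresses the independence of the supremum and the drawdown $\ovl X_{\Gamma_{1,q}}-X_{\Gamma_{1,q}}$. Passing to the variables $(s,z)=(\ovl X_{\Gamma_{1,q}},\,\ovl X_{\Gamma_{1,q}}-X_{\Gamma_{1,q}})$, so that $\{X_{\Gamma_{1,q}}\le y\}=\{z\ge s-y\}$, and integrating out $z$, one obtains $\PP(\ovl X_{\Gamma_{1,q}}\le x,\ X_{\Gamma_{1,q}}\le y)=\int_0^x\ovl u_{1,q}(s)\,\ovl G(s-y)\,\td s$, where $\ovl G(r)=\PP(\ovl X_{\Gamma_{1,q}}-X_{\Gamma_{1,q}}\ge r)$. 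Differentiating in $y$ then yields $D_{1,q}(x,y)=\int_{\max\{0,y\}}^{x}\ovl u_{1,q}(s)\,\unl u_{1,q}(s-y)\,\td s$; the interchange of derivative and integral is justified by dominated convergence using continuity and boundedness of $\unl u_{1,q}$, the same bound gives $|D_{1,q}(x,y)|\le\|\unl u_{1,q}\|_\infty$, and continuity of $(x,y)\mapsto D_{1,q}(x,y)$ follows from continuity of $\ovl u_{1,q}$ and $\unl u_{1,q}$.

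For part~(ii) I would write $H(x,y):=\E[\int_0^{\Gamma_{1,q}}I_{\{X_u\le x\}}\,\td u\,I_{\{X_{\Gamma_{1,q}}\le y\}}]=\int_0^\infty\E[I_{\{u<\Gamma_{1,q}\}}I_{\{X_u\le x\}}I_{\{X_{\Gamma_{1,q}}\le y\}}]\,\td u$ by Fubini, and for each fixed $u$ condition on $\mc F_u$, using that on $\{\Gamma_{1,q}>u\}$ the residual time $\Gamma_{1,q}-u$ is exponential with rate $q$ and independent of $\mc F_u$, so that $X_{\Gamma_{1,q}}-X_u$ is an independent copy of $X_{\Gamma_{1,q}}$. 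The inner expectation becomes $\te{-qu}\,\E[I_{\{X_u\le x\}}F(y-X_u)]$, with $F(r)=\PP(X_{\Gamma_{1,q}}\le r)$. Differentiating in $y$ and inserting the resolvent identity $\int_0^\infty\te{-qu}\PP(X_u\in\td z)\,\td u=q^{-1}u_{1,q}(z)\,\td z$ gives $\partial_y H(x,y)=\int_{-\infty}^{x}q^{-1}u_{1,q}(z)\,u_{1,q}(y-z)\,\td z$, and a further differentiation in $x$ produces the mixed derivative $\Omega_{1,q}(x,y)=q^{-1}u_{1,q}(x)\,u_{1,q}(y-x)$, in agreement with the form used in Proposition~\ref{prop:DOU}.

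The main obstacle is not these formal manipulations but their analytic justification, and underneath it the existence, boundedness and continuity of the densities $u_{1,q}$, $\ovl u_{1,q}$, $\unl u_{1,q}$. The density $u_{1,q}$ is delivered by the preceding lemma through Fourier inversion under Assumption~\ref{as:smooth}; the densities of the extrema require in addition the integrability of the Wiener--Hopf factors $\Psi^\pm_q$ needed to invert them, which is the genuinely delicate point in the general statement (and is automatic in the MEJD case, where Lemma~\ref{lem:XGq} exhibits them explicitly). Once boundedness of the relevant densities is available, every differentiation-under-the-integral step is covered by dominated convergence with the constant dominating functions $\|u_{1,q}\|_\infty$ and $\|\unl u_{1,q}\|_\infty$, integrated against the finite measures appearing above, and continuity of the resulting derivatives follows from continuity of $u_{1,q}$, $\ovl u_{1,q}$ and $\unl u_{1,q}$ by a final dominated-convergence argument.
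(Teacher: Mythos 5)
The paper itself omits the proof of this lemma (it is introduced as one ``the proof of which is omitted as it follows by standard arguments''), so there is no official proof to compare against line by line; the intended standard argument can, however, be read off from the proof of Theorem~\ref{thm:rec}, whose part (i) performs exactly this computation for general $n$. Your part (ii) is essentially correct and is that standard argument: conditioning on $\mc F_u$, using lack of memory of $\Gamma_{1,q}$ and the resolvent identity $\int_0^\infty \te{-qu}\,\PP(X_u\in\td z)\,\td u = q^{-1}u_{1,q}(z)\,\td z$ requires only the continuous bounded densities $u_{1,q}$ and $p(t,\cdot)$ supplied by the preceding lemma, and it reproduces $\Omega_{1,q}(x,y)=q^{-1}u_{1,q}(x)u_{1,q}(y-x)$ in agreement with the proof of Proposition~\ref{prop:DOU}.

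Part (i), however, contains a genuine gap. Your formula $D_{1,q}(x,y)=\int_{\max\{0,y\}}^x \ovl u_{1,q}(s)\,\unl u_{1,q}(s-y)\,\td s$ presupposes that $\ovl X_{\Gamma_{1,q}}$ and the drawdown $\ovl X_{\Gamma_{1,q}}-X_{\Gamma_{1,q}}$ possess continuous bounded densities, and Assumption~\ref{as:smooth} does not deliver this: it gives integrability of $\theta\mapsto q/(q+\Psi(\theta))=\Psi_q^+(\theta)\Psi_q^-(\theta)$, but integrability of a product of characteristic functions implies nothing about the factors (for a spectrally negative process satisfying the assumption, $\Psi_q^+(\theta)=\Phi(q)/(\Phi(q)-\mbf i\theta)$ is never integrable), and absolute continuity of the laws of the extrema of a general L\'{e}vy process is a delicate matter tied to the potential measures of the ladder height subordinators. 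You flag this yourself as ``the genuinely delicate point,'' but flagging it does not close it, and Lemma~\ref{lem:XGq} is available only in the MEJD case whereas the lemma is asserted for every process satisfying Assumption~\ref{as:smooth}; as written, your argument proves the MEJD case (where it duplicates the proof of Proposition~\ref{prop:DOU}) but not the lemma. The gap is avoidable, and this is surely the intended route: apply the strong Markov property at $\tau_x^+=\inf\{t:X_t>x\}$ together with lack of memory of $\Gamma_{1,q}$ to obtain
\[
\PP\le(\ovl X_{\Gamma_{1,q}}\le x,\, X_{\Gamma_{1,q}}\in\td y\ri) = \le(u_{1,q}(y) - \E\le[\te{-q\tau_x^+}\,u_{1,q}\le(y-X_{\tau_x^+}\ri)\ri]\ri)\td y ,
\]
so that $D_{1,q}(x,\cdot)$ exists, is bounded by $2\|u_{1,q}\|_\infty$ and is continuous by dominated convergence — using only the density $u_{1,q}$, i.e.\ precisely the ingredient Assumption~\ref{as:smooth} provides. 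This is the $n=1$ instance of the first-passage decomposition in the proof of Theorem~\ref{thm:rec}(i).
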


The functions $D_{1,q}$ and $\Omega_{1,q}$ admit semi-analytical expressions, 
which can be derived using the Markov property and 
the Wiener-Hopf factorisation of $X$. We recall (see {\em e.g.} Bertoin~\cite[Ch. VI]{Bertoin1996}) that the probabilistic form of the Wiener-Hopf factorisation of $X$ 
states that (a) the running supremum 
$\overline{X}_{\Gamma_{1,q}}$ and 
the {\em drawdown} $\overline{X}_{\Gamma_{1,q}} - {X}_{\Gamma_{1,q}}$ 
of $X$ at the random time $\Gamma_{1,q}$ are independent, and 
(b) the drawdown $\overline{X}_{\Gamma_{1,q}} - {X}_{\Gamma_{1,q}}$ 
has the same 
law as the negative of 
the running infimum $-\unl X_{\Gamma_{1,q}}$. 
The probabilistic form of the Wiener-Hopf factorisation implies that the characteristic function
of the random variable $X_{\Gamma_{1,q}}$ 
is equal to the product of the characteristic functions $\Psi^+_q$ and $\Psi^-_q$ of $\ovl X_{\Gamma_{1,q}}$ and $\unl X_{\Gamma_{1,q}}$,
\begin{equation*}
\Psi^+_q(\theta) = \E[\exp(\mbf i\theta \overline{X}_{\Gamma_{1,q}})],\qquad 
\Psi^-_q(\theta) = \E[\exp(\mbf i\theta \underline{X}_{\Gamma_{1,q}})].
\end{equation*}

In the following result we establish that the functions 
$D_{n,q}$, $\Omega_{n,q}$ are well-defined and satisfy the recursions \eqref{eq:Drec}---\eqref{eq:Orec}:

\begin{Thm}\label{thm:rec} 
Let $q>0$, $n\in\mbb N$ and let Assumption~\ref{as:smooth} hold. 

{\rm (i)} For any $x\in\mbb R_+$, the function  $y\mapsto \PP(\ovl X_{\Gamma_{n,q}}\leq x, X_{\Gamma_{n,q}}\leq y)$
admits a continuous bounded density denoted by $D_{n,q}$. Moreover, 
the function $(x,y)\mapsto \E\le[\int_0^{\Gamma_{n,q}}I_{\{X_u\leq x\}}\td u\, I_{\{ X_{\Gamma_{n,q}}\leq y\}}\ri]$
is continuously differentiable on $\mbb R^2$ with bounded mixed-derivative denoted by $\Omega_{n,q}$.

{\rm (ii)} The functions $D_{n,q}$ and $\Omega_{n,q}$ satisfy the recursions \eqref{eq:Drec}---\eqref{eq:Orec}.
\end{Thm}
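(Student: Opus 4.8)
The plan is to argue by induction on $n$, exploiting the additivity of the Gamma clock: since $\Gamma_{n,q}$ is a sum of $n$ independent exponentials of rate $q$, we may realise $\Gamma_{n+1,q}$ as $\Gamma_{1,q}+\Gamma_{n,q}'$ with $\Gamma_{1,q}$ and $\Gamma_{n,q}'$ independent and independent of $X$. The base case $n=1$ is furnished by Lemma~\ref{lem:density}, which already provides the existence, continuity and boundedness of $D_{1,q}$ and $\Omega_{1,q}$, together with the continuity and boundedness of the densities $u_{1,q}$ established earlier under Assumption~\ref{as:smooth}.

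For the inductive step I would first establish the two recursions at the level of measures, and only afterwards differentiate. Conditioning on the split time $\Gamma_{1,q}$ and applying the Markov property of $X$ at deterministic times (legitimate since the Gamma clocks are independent of $X$), the post-$\Gamma_{1,q}$ increment process $\{X_{\Gamma_{1,q}+s}-X_{\Gamma_{1,q}}\}_{s\geq0}$ is an independent copy $X'$ of $X$, run over the independent time $\Gamma_{n,q}'$. Writing $w=X_{\Gamma_{1,q}}$, the running maximum decomposes as $\ovl X_{\Gamma_{n+1,q}}=\max\{\ovl X_{\Gamma_{1,q}},\,w+\ovl{X'}_{\Gamma_{n,q}'}\}$ and the terminal value as $X_{\Gamma_{n+1,q}}=w+X'_{\Gamma_{n,q}'}$, so that the event $\{\ovl X_{\Gamma_{n+1,q}}\leq x\}$ is exactly $\{\ovl X_{\Gamma_{1,q}}\leq x\}\cap\{\ovl{X'}_{\Gamma_{n,q}'}\leq x-w\}$. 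Integrating the product measure of the two conditionally independent segments over $w\in(-\infty,x]$ (the constraint $w\leq\ovl X_{\Gamma_{1,q}}\leq x$ forcing the upper limit, and $\ovl X_{\Gamma_{n+1,q}}\geq0$ forcing $\max\{y,0\}\leq x$) yields precisely~\eqref{eq:Drec}. For the occupation functional I would split $\int_0^{\Gamma_{n+1,q}}=\int_0^{\Gamma_{1,q}}+\int_{\Gamma_{1,q}}^{\Gamma_{n+1,q}}$: in the first piece the occupation up to $\Gamma_{1,q}$ paired with its endpoint contributes $\Omega_{1,q}(x,w)$ while the terminal value contributes the density $u_{n,q}(y-w)$ of $X'_{\Gamma_{n,q}'}$; in the second piece the roles reverse, the shifted copy contributing $\Omega_{n,q}(x-w,y-w)$ and the first segment only its endpoint density $u_{1,q}(w)$. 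Summing the two pieces gives~\eqref{eq:Orec}.

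These measure identities yield part (i) and the differentiated form of part (ii) together. Writing $\PP(\ovl X_{\Gamma_{n+1,q}}\leq x,\,X_{\Gamma_{n+1,q}}\leq y)$ as the corresponding iterated integral and differentiating in $y$ under the integral sign (justified by dominated convergence using the inductive boundedness of $D_{n,q}$) shows that the density $D_{n+1,q}$ exists and equals the right-hand side of~\eqref{eq:Drec}; its continuity follows from the continuity of $D_{1,q}$ and $D_{n,q}$ via dominated convergence, and its boundedness from the estimate $\|D_{n+1,q}\|_\infty\leq\|D_{n,q}\|_\infty\int_{-\infty}^x D_{1,q}(x,w)\,\td w\leq\|D_{n,q}\|_\infty$. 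The same scheme handles the occupation function: differentiating the two convolution integrals in $x$ and $y$ produces the bounded mixed derivative $\Omega_{n+1,q}$, whose continuity and boundedness are inherited from those of $\Omega_{1,q}$, $\Omega_{n,q}$, $u_{1,q}$ and $u_{n,q}$. This closes the induction.

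I expect the main obstacle to be the rigorous justification of the path decomposition rather than the analytic estimates. One must carefully disintegrate the joint law of $(\ovl X_{\Gamma_{1,q}},X_{\Gamma_{1,q}})$ and the occupation measure of the first segment against the post-split increment process, verifying by Fubini and the independence of the clocks from $X$ that the first segment is conditionally independent of the second given $X_{\Gamma_{1,q}}$, and that the two independent Gamma times genuinely recombine into $\Gamma_{n+1,q}$. Once this conditional-independence structure is in place, the interchange of differentiation and integration required for part (i) is routine, resting only on the boundedness and continuity supplied by the inductive hypothesis and by Assumption~\ref{as:smooth}.
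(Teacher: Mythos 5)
Your proposal is correct, and for part (ii) as well as the occupation-time half of part (i) it coincides with the paper's own argument: the paper likewise realises $\Gamma_{n,q}$ as an independent sum $\Gamma_{1,q}+\Gamma_{n-1,q}$, applies the simple Markov property at the split time (legitimate, as you note, because the Gamma clocks are independent of $X$) --- via the decomposition $\ovl X_t = \max\{\ovl X_s,\, X_s + \sup_{0\leq u\leq t-s}(X_{u+s}-X_s)\}$ for the supremum in \eqref{eq:RecursiveFormula} and the identity \eqref{eq:decoc} for the occupation functional --- and proves the differentiability statement for $\Omega_{n,q}$ by exactly your induction from the base case in Lemma~\ref{lem:density}(ii). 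Where you genuinely differ is the existence of the continuous bounded density $D_{n,q}$ in part (i): the paper does not deduce it from the recursion but proves it directly for each $n$, by decomposing the event $\{\ovl X_{\Gamma_{n,q}}\leq x\}$ at the first-passage time $\tau^+_x$ (strong Markov property plus lack of memory of the exponential stages), computing the Fourier transform of the measure $\PP(\ovl X_{\Gamma_{n,q}}\leq x, X_{\Gamma_{n,q}}\in\td y)$, and observing that Assumption~\ref{as:smooth} together with the bound $|q/(q+\Psi(s))|\leq 1$ makes this transform integrable, so that Fourier inversion (Sato \cite[Prop.~28.1]{Sato1999}) yields a continuous bounded density. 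Your alternative --- bootstrapping from the $n=1$ case of Lemma~\ref{lem:density}(i) through the convolution identity, differentiating under the integral by dominated convergence, and using $\int_{-\infty}^x D_{1,q}(x,w)\,\td w\leq 1$ for the uniform bound --- is sound and more elementary (no Fourier inversion beyond what is already contained in Lemma~\ref{lem:density}), and it delivers the regularity and the recursion in one stroke; what the paper's Fourier route buys is a proof valid for each $n$ separately, independent of the recursion, with an explicit quantitative handle on the transform. Both routes rest on Lemma~\ref{lem:density} for the one-step objects, and the disintegration issue you flag as the main obstacle is treated in the paper exactly as you propose: condition on the independent Gamma clocks, apply the Markov property at deterministic times, and recombine by Fubini.
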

\begin{rems}{\rm Since the pinned process $X^{(0,0)\to (\Gamma_{n,q},y)}$ is equal in law to the 
process $X^{\Gamma_{n,q}} = \{X_u, u\in[0,\Gamma_{n,q}]\}$ stopped at the random time $\Gamma_{n,q}$
and conditioned on $\{X_{\Gamma_{n,q}}=y\}$, it follows that the functions $\vec D^{(n)}_q$ 
($\vec\Omega^{(n)}_q$) are equal to the ratio  of 
$D_{n,q}$ ($\Omega_{n,q}$, respectively)  and $u_{n,q}$, that is,}
\begin{eqnarray*}
D_{n,q}(x,y) = \vec D^{(n)}_q(x,y)u_{n,q}(y),\q \Omega_{n,q}(x,y) = 
\frac{\td}{\td x}\vec\Omega^{(n)}_q(x,y)u_{n,q}(y), \q 
x\in\mbb R_+,\ y\in\mbb R.
\end{eqnarray*}
\end{rems}
\begin{proof}[Proof of Theorem~\ref{thm:rec}] 
(i)  Several applications of the strong Markov property of $X$ 
and the lack of memory property of the exponential distribution
yield
\begin{eqnarray*} \PP\left[\overline{X}_{\Gamma_{n,q}}\leq x, X_{\Gamma_{n,q}}\in\td y \right]
&=& \PP\left[\tau^+_x \ge \Gamma_{n,q}, X_{\Gamma_{n,q}}\in\td y\right]\\
&=& \PP\left[X_{\Gamma_{n,q}}\in\td y\right] - \sum_{k=1}^n\PP\left[\Gamma_{k-1,q}\leq \tau^+_x  < \Gamma_{k,q}, X_{\Gamma_{n,q}}\in\td y\right]\\
&=& \PP\left[X_{\Gamma_{n,q}}\in\td y\right] - \sum_{k=1}^n \int_{\mbb R_+}
\E\left[I_{\{ \Gamma_{k-1,q}\leq \tau^+_x < \Gamma_{k,q}\}}I_{\{X_{\tau^+_x}\in\td z\}}\right]
 \PP[z+X_{\Gamma_{n-k+1,q}}\in\td y ],
\end{eqnarray*}
with $\Gamma_{0,q}:=0$.
Taking the Fourier transform of the measure $r^{n,q}_x(\td y) := \PP\left[\overline{X}_{\Gamma_{n,q}}\leq x, X_{\Gamma_{n,q}}\in\td y \right]$ we find
\begin{equation}\label{eq:s}
\mc F r_x(s) = 
\E[\exp\{\mbf i s X_{\Gamma_{n,q}}\}] - \sum_{k=1}^n
\E[\exp\{\mbf i s X_{\Gamma_{n-k+1,q}}\}]
\E[\exp\{\mbf i s X_{\tau^+_x}\}I_{\{ \Gamma_{k-1,q}\leq \tau^+_x < \Gamma_{k,q}\}}], \q s\in\mbb R.  
 \end{equation}
Since the second factors in the sum in \eqref{eq:s} 
are bounded by one and 
\begin{equation}\label{eq:FFun}
\E[\exp(\mbf i\theta X_{\Gamma_{n,q}})] = \left(\frac{q}{q + \Psi(\theta)}\right)^n, 
\end{equation} we have  $|\mc F r_x(s)|\leq \sum_{k=1}^n\int q^k|q + \Psi(s)|^{-k}\td s$, 
for any $x\in\mbb R_+$, $q>0$ and $n\in\mbb N$, 
which is finite by Assumption~\ref{as:smooth} and the bound $|q/(q+\Psi(s))|\leq 1$ that holds for all $s\in\mbb R$. 
We conclude that, for any $x\in\mbb R_+$, the measure $r^{n,q}_x(\td y)$ admits a continuous bounded density (by Sato~\cite[Prop. 28.1]{Sato1999}).

We show the required differentiability of $\E\le[\int_0^{\Gamma_{n,q}}I_{\{X_u\leq x\}}\td u\, I_{\{ X_{\Gamma_{n,q}}\leq y\}}\ri]$ 
by induction with respect to $n$. Noting that the case $n=1$ follows from Lemma~\ref{lem:density}(ii), we next turn 
to the induction step. Assume thus that the assertion is valid for given $n\in\mbb N$. We have by an application of the Markov property \begin{eqnarray}\label{eq:decoc}
&& \E\left[\int_0^{t+u} I_{\{X_s\leq x\}}\td s\>\> I_{\{X_{t+u}\in \td b\}} \right] =
\int_{w\in\mbb R} \E \left[\int_0^{t} I_{\{X_s\leq x\}}\td s\>\> I_{\{X_{t}\in \td w\}}\right]\PP \left[w+ X_{u}\in \td b\right]  \\
&& \q\q+ \int_{w\in\mbb R} \E\left[\int_0^{u} I_{\{w+X_s\leq x\}}\td s\>\> I_{\{w+X_{u}\in \td b\}} \right]
\PP \left[X_t\in \td w\right], 
\nonumber 
\end{eqnarray}
for any real $x$.  Replacing in \eqref{eq:decoc}
$t$ and $u$ by the independent random times 
$\Gamma_{1,q}$ and $\Gamma_{n-1,q}$, using the fact that their sum is equal in distribution to $\Gamma_{n,q}$ 
and that the random variables 
$X_{\Gamma_{n,q}}$ and $X_{\Gamma_{1,q}}$ have continuous densities 
$u_{n,q}$ and $u_{1,q}$ (by Lemma~\ref{lem:density}), it follows from the induction assumption that the assertion is valid 
for $n+1$. It follows thus by induction that we have the required differentiability for all $n\in\mbb N$.

(ii)  Since we may write
$$\ovl X_t = \max\le\{X_s + \sup_{0\leq u\leq t-s}(X_{u+s}-X_s), \ovl X_s\ri\},
\q\text{for any $s,t$ with $0\leq s\leq t,$}
$$
it follows as a consequence of the stationarity and independence of increments of $X$, and the fact that a $\Gamma_{n,q}$ random variable is equal in distribution to the sum 
of independent $\Gamma_{n-1,q}$ and $\Gamma_{1,q}$ random variables
that we have
\begin{eqnarray}
\label{eq:RecursiveFormula} \lefteqn{ 
\PP\left(\ovl X_{\Gamma_{n,q}} \leq x, X_{\Gamma_{n,q}}\in\td w\right) 
= \PP\le(\max\le\{X_{\Gamma_{1,q}}
+\ovl X'_{\Gamma_{n-1,q}},\ovl X_{\Gamma_{1,q}}\ri\} 
\leq x, 
X_{\Gamma_{1,q}} + X'_{\Gamma_{n-1,q}}\in\td w\ri)}  \\
&=& \int_{(- \infty,x]}  \PP(\ovl X_{{\Gamma_{1,q}}} \leq x,X_{{\Gamma_{1,q}}}\in\td z)
\PP(z+\ovl X_{\Gamma_{n-1,q}} \leq x, z+ X_{{\Gamma_{n-1,q}}}\in\td w),
\nonumber
\end{eqnarray}
where the random variables  $\ovl X'_{\Gamma_{n-1,q}}$ 
and $X'_{\Gamma_{n-1,q}}$ 
are independent of $X$. We arrive at 
the identity in \eqref{eq:Drec} 
since the L\'{e}vy process $X$ is spatially homogeneous.

The recursion follows from \eqref{eq:decoc} replacing as before
$t$ and $u$ by the independent random times 
$\Gamma_{1,q}$ and $\Gamma_{n-1,q}$ and using the fact that their sum is equal in distribution to a $\Gamma_{n,q}$ random variable.
\end{proof}
 
{\small

}
\end{document}